\def\C{\mathbb C}
\def\R{\mathbb R}
\def\epsilon{\varepsilon} 
\def\eps{\varepsilon} 
\newtheorem{thm}{Theorem}[section]
\newtheorem{fact}{Fact}
\newtheorem{claim}[thm]{Claim}
\newtheorem{lem}[thm]{Lemma}
\newtheorem*{conjecture}{Conjecture}
\theoremstyle{definition}
\theoremstyle{remark}
\numberwithin{equation}{section}
\newenvironment{verification}[1][\proofname]{%
  \proof[Verification.]%
}{\endproof}
\title{\bf Dimensions of the irreducible representations of the symmetric and alternating group}
\date{}
\author{ Korneel Debaene\\
}
\begin{document}

\maketitle

\thispagestyle{fancy}
\pagestyle{fancy}

\begin{abstract} We establish the existence of an irreducible representation of $A_n$ whose dimension does not occur as the dimension of an irreducible representation of $S_n$, and vice versa. This proves a conjecture by Tong-Viet. The main ingredient in the proof is a result on large primefactors in short intervals.

 \end{abstract}

\section{Introduction and statement of results}

To what extent are groups determined by their characters? On the one hand, there are examples known of non-isomorphic groups with exactly the same character table, e.g. $Q_8$ and $D_8$. On the other hand, for nonabelian simple groups, a conjecture by Huppert predicts quite a different behaviour.
 
\begin{conjecture} Let $G$ be a finite group and $H$ be a finite nonabelian simple group such that the sets of character degrees of $G$ and $H$ are the same. Then there exists an abelian group $A$ such that $G\cong H\times A$.
\end{conjecture}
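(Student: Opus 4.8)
The plan is to follow Huppert's own strategy for this conjecture, which reduces it to a case analysis --- still not completed in general --- over the classification of finite simple groups. Throughout, write $\mathrm{cd}(G)$ for the set of irreducible character degrees of a finite group $G$, so that the hypothesis reads $\mathrm{cd}(G)=\mathrm{cd}(H)$ with $H$ nonabelian simple. The argument would proceed in four steps.

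First I would show that $G'=G''$, i.e.\ that the derived subgroup of $G$ is perfect. If not, then $G/G''$ is a nonabelian solvable group whose degree set lies inside $\mathrm{cd}(H)$; but degree sets of solvable groups are tightly constrained (Taketa's bound on derived length, Ito--Michler and Thompson-type divisibility restrictions, and the requirement that suitable prime-power degrees occur), whereas $\mathrm{cd}(H)$ for a nonabelian simple group is too ``spread out'' --- for instance it contains pairwise coprime members that are too large --- to be realised by any nontrivial solvable quotient. The resulting contradiction forces $G'$ to be perfect.

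Next, pick $M\trianglelefteq G$ maximal among normal subgroups properly contained in $G'$, so that $G'/M$ is a chief factor of $G$; since $G'$ is perfect, $G'/M\cong S^k$ for some nonabelian simple $S$ and $k\ge 1$. The crucial step is to prove $S\cong H$ and $k=1$. Via Clifford theory each character degree of $S$ must (after multiplication by suitable cofactors) divide a member of $\mathrm{cd}(G)=\mathrm{cd}(H)$, and conversely the largest degrees of $H$ have to be accounted for within $G'/M$; comparing the sets of prime divisors, the maximal degrees, and a handful of individual degrees of $S^k$ with those of $H$, and invoking the classification of finite simple groups together with the known descriptions of $\mathrm{cd}(S)$ for each family, one rules out every possibility except $S\cong H$, $k=1$. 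One then argues $M=1$: a nonabelian chief factor of $G$ inside $M$ would, by the same comparison, be another copy of $H$, and then $G'$ would admit a character degree that is a product of two degrees of $H$, which is not in $\mathrm{cd}(H)$; an abelian chief factor $N/L$ inside $M$ would, through Gallagher's theorem and knowledge of the Schur multiplier and the relevant $\F_p H$-modules, produce a degree divisible by $|N/L|$ times a nontrivial degree of $H$, again leaving $\mathrm{cd}(H)$. Hence $M=1$ and $G'\cong H$.

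Finally, with $G'\cong H$ one considers $G/C_G(G')$, which embeds in $\mathrm{Aut}(H)$; because $Z(H)=1$ and any nontrivial outer (diagonal, field, or graph) automorphism acting would introduce degrees outside $\mathrm{cd}(H)$, one obtains $G=G'\times C_G(G')$, and then $A:=C_G(G')$ must be abelian, since a nonlinear irreducible character of $A$ would again enlarge $\mathrm{cd}(G)$ beyond $\mathrm{cd}(H)$. This gives $G\cong H\times A$ with $A$ abelian. The decisive obstacle is the middle step: there is no known uniform reason why $\mathrm{cd}(S^k)$ can coincide with a prescribed $\mathrm{cd}(H)$ only when $S^k\cong H$, so the argument currently has to be run family by family across the classification --- which is exactly why Huppert's conjecture remains open in general, having been verified only for the alternating groups, the sporadic groups, and several families of groups of Lie type. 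A full proof would seem to demand a genuinely new invariant of the degree set that recognises a simple group directly.
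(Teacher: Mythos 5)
The statement you have been asked to prove is Huppert's Conjecture, which the paper does \emph{not} prove --- it is stated explicitly as an open conjecture, ``verified for many simple groups, but remains open for the alternating groups $A_n$ when $n\geq 14$.'' The paper's actual contribution (Theorem~\ref{Theorem1}) is a much weaker and quite different statement: that the degree sets of $S_n$ and $A_n$ are distinct, proved via hook products of partitions and prime-counting in short intervals. So there is no ``paper's own proof'' to compare against, and any complete proof you produced would be a major new result rather than a reconstruction.

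Your proposal is a faithful outline of Huppert's standard five-step reduction (show $G'$ is perfect; identify the chief factor $G'/M$ with $H$; show $M=1$; control $G/C_G(G')$ via $\mathrm{Aut}(H)$; conclude $C_G(G')$ is abelian), but as you yourself concede, the middle step --- ruling out every simple $S$ and every $k$ with $\mathrm{cd}(S^k)$ compatible with $\mathrm{cd}(H)$ other than $S\cong H$, $k=1$ --- has no uniform argument and must currently be carried out family by family over the classification. That is precisely the gap that keeps the conjecture open, so the proposal is a strategy sketch, not a proof. Two smaller points: the first step also has no fully general solvable-quotient obstruction of the kind you invoke (the ``spread out'' heuristic needs to be made precise per family), and in the final step the claim that every outer automorphism action introduces new degrees likewise requires group-by-group verification. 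If your goal was to engage with what this paper actually establishes, the relevant target is Theorem~\ref{Theorem1}, whose proof is combinatorial and analytic and shares nothing with the Clifford-theoretic machinery you describe.
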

This conjecture has been verified for many simple groups, but remains open for the alternating groups $A_n$ when $n\geq14$. 

In this regard, Tong-Viet proved that the {\em multiset} of dimensions of irreducible representations of $A_n$ determines $A_n$ in \cite{TongAlt}. He conjectured that the set of dimensions of irreducible representations is different for $A_n$ and $S_n$ in \cite{Tong}, and proved it for the special case that $n$ is of the form $2p+1$. 

We will prove the following theorem, which proves Tong-Viets conjecture, and gives some indication in favour of Huppert's Conjecture.
\begin{thm}
\label{Theorem1} The set $\{\dim \rho\mid \rho \mbox{ irreducible representation of }S_n\}$ is not equal to the set $\{\dim \rho\mid \rho \mbox{ irreducible representation of }A_n\}$, for all $n\geq 3$.
\end{thm}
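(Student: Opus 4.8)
The plan is to transport the statement into the combinatorics of Young diagrams. Index $\operatorname{Irr}(S_n)$ by partitions $\lambda\vdash n$, with dimension $f^{\lambda}=n!/\prod_{c}h(c)$ given by the hook length formula. Clifford theory for $A_n\trianglelefteq S_n$ gives the usual trichotomy: if $\lambda\neq\lambda'$ then the associated $S_n$-module restricts irreducibly to $A_n$, and $\lambda,\lambda'$ give the same $A_n$-module, of dimension $f^{\lambda}$; if $\lambda=\lambda'$ it splits into two $A_n$-irreducibles of dimension $\tfrac12 f^{\lambda}$. Hence
\[
\{\dim\rho:\rho\in\operatorname{Irr}(A_n)\}=\{f^{\lambda}:\lambda\vdash n,\ \lambda\neq\lambda'\}\ \cup\ \{\tfrac12 f^{\lambda}:\lambda\vdash n,\ \lambda=\lambda'\},
\]
while $\{\dim\rho:\rho\in\operatorname{Irr}(S_n)\}=\{f^{\lambda}:\lambda\vdash n\}$. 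Since every $f^{\lambda}$ with $\lambda\neq\lambda'$ occurs in both sets, it suffices to exhibit a self-conjugate $\lambda\vdash n$ with $\tfrac12 f^{\lambda}\notin\{f^{\mu}:\mu\vdash n\}$; this gives a dimension of $A_n$ which is not a dimension of $S_n$. (Running the same analysis on the largest self-conjugate dimension yields, symmetrically, a dimension of $S_n$ not occurring for $A_n$, i.e.\ the ``vice versa'' of the abstract, but the reduction above is all Theorem~\ref{Theorem1} needs.)

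Two ingredients then drive the argument. The first is an arithmetic dictionary: if $q$ is a prime with $n/2<q\le n$ then, since all hook lengths of any $\mu\vdash n$ lie in $[1,n]$, the only one divisible by $q$ can be $q$ itself, and it occurs at most once; hence $v_q(f^{\mu})=v_q(n!)-\#\{c:h(c)=q\}\in\{0,1\}$, and $q\mid f^{\mu}$ exactly when $\mu$ has no cell of hook length $q$. The second is an explicit self-conjugate test partition whose dimension is essentially a central binomial coefficient: for odd $n=2m+1$ take the hook $\lambda=(m+1,1^{m})$, so that $f^{\lambda}=\binom{2m}{m}$ and $D:=\tfrac12 f^{\lambda}=\binom{2m-1}{m}$; for even $n=2m$ take the ``fat hook'' $\lambda=(m,2,1^{m-2})$, for which one computes $f^{\lambda}=\tfrac{(m-1)^{2}}{2m-1}\binom{2m}{m}$ and $D:=\tfrac12 f^{\lambda}=\tfrac{(m-1)^{2}}{2(2m-1)}\binom{2m}{m}$. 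By Kummer's theorem $q\,\|\,\binom{2m}{m}$ (that is, $q\mid\binom{2m}{m}$ but $q^{2}\nmid\binom{2m}{m}$) for every prime $q$ with $m<q<2m$, and the auxiliary factors $\tfrac12$ and $(m-1)^{2}/(2m-1)$ spoil this for at most the single prime $q=n-1$. So $q\,\|\,D$ for every prime $q$ with $n/2<q\le n-2$, $q\neq n-1$; consequently any $\mu\vdash n$ with $f^{\mu}=D$ can have no hook length equal to a prime in $(n/2,n-2]\setminus\{n-1\}$.

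The endgame is to show that no such $\mu$ exists. On one hand, the crude estimate $D\asymp n^{O(1)}2^{n}$ confines $\mu$ to a ``thin'' family: a $\mu$ fat enough to contain a large solid square has super-exponential dimension, so $\mu$ must lie in a thin neighbourhood of a hook shape. On the other hand, the beta-set description shows that the hook lengths of $\mu$ contain a run of consecutive integers $\{\mu_2+\mu_1'-1,\dots,\mu_1+\mu_1'-1\}$ at the top, and a conjugate run coming from $\mu'$; whenever $\mu_1+\mu_1'-1>n/2$ these meet $(n/2,n-2]$ in subintervals whose lengths one can bound from below, and by the cited theorem on primes in short intervals any such subinterval of length a small power of $n$ contains a prime. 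Combined, these force the exposed runs either to stay below $n/2$ — which, by the size bound, cannot happen — or to be extremely short, pinning $\mu$ down to a short explicit list of shapes (near-central hooks, and a bounded perturbation of a few two-row and two-column partitions), on which the equation $f^{\mu}=D$ is ruled out by comparing exact values. I expect the real obstacle to be precisely this confrontation: ruling out the ``thin'' competitors whose run of hook lengths in $(n/2,n-2]$ has length only $n^{\theta}$ for small $\theta$, which is exactly what forces a short-interval prime estimate rather than merely Bertrand's postulate. Since the size bound and the prime estimate are effective only for $n$ past an explicit $n_0$, the finitely many remaining $n$ — together with the tiny cases $n=3,4,5$ — are disposed of by writing down the degree sets directly.
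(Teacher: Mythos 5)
Your reduction (hook length formula plus the Clifford-theoretic trichotomy) and even your test partitions — the hook $(m+1,1^m)$ for odd $n$ and the self-conjugate $(m,2,1^{m-2})$ for even $n$ — are exactly those of the paper, and proving one direction does suffice for the statement. The genuine gap is in the endgame. Your arithmetic dictionary (a competitor $\mu$ with $f^\mu=D$ must avoid every prime hook length in $(n/2,\,n-2]$) exerts essentially no force on the competitors that actually matter. For $n=2k+1$, a hook $\mu=(k+r+1,1^{k-r})$ has hook-length set $\{1,\dots,k+r\}\cup\{1,\dots,k-r\}\cup\{n\}$, so its only hook lengths in the window are $k+1,\dots,k+r$; balanced two-row shapes likewise have all hook lengths at most about $n/2+r$. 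Since $\binom{2k}{k+r}=\binom{2k}{k}e^{-\Theta(r^2/k)}$, the regime in which $f^\mu$ can be near $D=\tfrac12\binom{2k}{k}$ is $r=O(\sqrt{k})$, and no prime-in-short-interval theorem — effective or not — produces a prime in $(k,k+r]$ for $r\approx\sqrt{k}$ (Schoenfeld needs length $k/16597$, Baker--Harman--Pintz needs $k^{0.525}$). So the step "the exposed run must hit a prime" cannot be completed precisely where it is needed, and the deferred "ruled out by comparing exact values" for near-central hooks is not a finite check: it is the Diophantine problem of excluding $(k+r)!\,(k-r)!=2\,k!^2$ and its $\eps$-perturbations for all $k$. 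The paper resolves exactly this point with a strictly stronger tool: a large prime factor $p'>3h$ of $\prod_{i=1}^{h}(k+i)$, available down to $h=\sqrt{k}/2$ (a lemma of Bauer--Bennett type, not a prime in the interval), fed into a comparison of $p'$-adic valuations of $\Pi(\mu)$ against $\Pi_k$; and it pins down the shape of $\mu$ not by size alone but by locating the boxes whose hook lengths are divisible by two primes just below $n/2$, which divide $\Pi_k$ to the second power.

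Two further points. Your structural claim that the hook lengths of $\mu$ contain the consecutive run $\{\mu_2+\mu_1'-1,\dots,\mu_1+\mu_1'-1\}$ is false: for $\mu=(5,3,1)$ no box has hook length $6$. What is true (and what the paper uses) is the bottom run $1,\dots,\mu_1-\mu_2$ in the first row, or the beta-set fact that the first-row hook lengths miss exactly $\mu_1'-1$ values of $[1,h(1,1)]$; neither gives you long exposed runs inside $(n/2,\,n-2]$ for the dangerous shapes, for the reason above. Finally, the "thinness" reduction from $f^\mu=D$ to a bounded perturbation of hooks and two-row/two-column shapes is itself a substantial quantitative argument (the paper devotes several hook-product estimates and a case analysis to it), and the finitely many $n$ below the effective threshold are not a triviality — the paper needs a structured computer search up to $k=337$ — though that part of your plan is sound in principle.
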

In fact, the proof is constructive and gives a specific irreducible representation of $S_n$ whose dimension does not occur as a dimension of any irreducible representation of $A_n$, and a specific irreducible representation of $A_n$ whose dimension does not occur as a dimension of any irreducible representation of $S_n$. 

The proof depends on the size of $n$ in the following way. For large $n$ we give a proof using a lemma on the existence of prime numbers in relatively short intervals. For medium sized $n$ we verify this lemma by computer, and for very large $n$ this is implied by a theorem by Schoenfeld \cite{Schoenfeld}. For small $n$ on the other hand, we need to verify the theorem directly, with the help of a computer.

The proof uses the well known description of the irreducible representations of $S_n$ and $A_n$ in terms of Young diagrams of partitions. We introduce some notation. A partition of a natural number $n$ is a non-increasing sequence $\lambda=(\lambda_1,\lambda_2, \cdots, \lambda_s)$ of natural numbers with sum $\sum \lambda_i = n$. The corresponding Young diagram consists of boxes for each $i,j\geq 1$ such that $j\leq \lambda_i$. We use the matrix notation and refer to the box on the $i$-th row and $j$-th column by $(i,j)$. The conjugate partition $\lambda^{*}$ is the partition corresponding to the transpose of the diagram corresponding to $\lambda$. Given a box at position $(i,j)$, its arm is the set of boxes at positions $\{(i,k)\mid j<k\}$ and its leg is the set of boxes at positions $\{(k,j)\mid i<k\}$. The hook length $h(\lambda)_{(i,j)}$ of a box is the sum of the cardinalities of its arms and legs plus one. The hook product of a diagram is the product of the hook lengths of all boxes. The hook product of the diagram corresponding to the partition $\lambda$ will be denoted by $\Pi(\lambda)$.

The irreducible representations of $S_n$ correspond one-to-one to partitions $\lambda$ of $n$. For each partition $\lambda$, the dimension of the irreducible representation $\rho_\lambda$ can be recovered from the hook product $\Pi(\lambda)$ of the Young diagram corresponding to $\lambda$ through the formula $\dim \rho_\lambda = \frac{n!}{\Pi(\lambda)}$.

For $A_n$, the correspondence is a bit more subtle. Each pair $(\lambda, \lambda^*)$ of conjugate partitions of $n$, where $\lambda\neq\lambda^*$, corresponds to an irreducible representation $\rho_\lambda$ of $A_n$, where the formula for the dimension is the same as above; $\dim \rho_\lambda = \frac{n!}{\Pi(\lambda)}$. Each self-conjugate partition $\lambda$ corresponds to a pair of irreducible representations $(\rho_{\lambda,1}, \rho_{\lambda,2})$, both having dimension $\dim \rho_{\lambda,i} = \frac{n!}{2\Pi(\lambda)}$.
For details we refer the reader to \cite[Chapter 4 and 5]{FultonHarris}
\section{Proofs}

We will use two facts about prime numbers in short intervals. The first fact holds only for $k\geq 337$, and this is the reason why we have to restrict to this range in the main Theorem of this section.
\begin{lem} \label{Primelemma1} For all integers $k\geq337$ the interval $[k-\lfloor\frac{k}{20}\rfloor,k]$ contains at least two prime numbers.
\end{lem}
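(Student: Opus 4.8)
The plan is to split the range $k\ge 337$ into a finite initial segment, handled by direct computation, and a cofinite tail, handled by a known explicit estimate on primes in short intervals. Both pieces are needed: a purely computational argument cannot cover all of the infinitely many $k$, while a purely analytic argument with current explicit constants cannot reach down as far as $k=337$.

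For the tail, recall the explicit form of Schoenfeld's theorem (or any of its refinements): there are a constant $x_0$ and a small $\delta>0$ with $(1+\delta)^2<\tfrac{20}{19}$ such that every interval $(x,x(1+\delta)]$ with $x\ge x_0$ contains a prime. Given $k$, write $a=k-\lfloor k/20\rfloor$, so that $\tfrac{19k}{20}\le a\le \tfrac{19k}{20}+1$. Applying the estimate at $x=a$ yields a prime $p_1\in(a,a(1+\delta)]$, and applying it again at $x=p_1$ yields a prime $p_2\in(p_1,p_1(1+\delta)]$; hence $a<p_1<p_2\le a(1+\delta)^2$. Since $(1+\delta)^2<\tfrac{20}{19}$ and $a\le\tfrac{19k}{20}+1$, we get $a(1+\delta)^2\le k$ as soon as $k$ exceeds an explicit bound $K$, and $K$ may be chosen large enough to also guarantee $a\ge x_0$. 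For all $k\ge K$ this produces two distinct primes $p_1,p_2$ in $[a,k]$, as required.

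For the finite range $337\le k<K$ we argue by computer. Observe that $[\,k-\lfloor k/20\rfloor,\,k\,]$ contains at least two primes precisely when the second largest prime not exceeding $k$ is at least $k-\lfloor k/20\rfloor$. Since $k\mapsto k-\lfloor k/20\rfloor$ is non-decreasing, it suffices to run over the consecutive prime triples $p_{i-1}<p_i<p_{i+1}$ whose block $p_i\le k<p_{i+1}$ meets $[337,K)$, and to check for each of them the single inequality $p_{i-1}\ge (p_{i+1}-1)-\big\lfloor (p_{i+1}-1)/20\big\rfloor$ (the worst case in the block being $k=p_{i+1}-1$). This is a finite verification; one checks in particular that it already fails at $k=336$, where $[320,336]$ contains only the prime $331$, which is exactly why the hypothesis $k\ge 337$ appears.

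The only point requiring care is the bookkeeping in the tail: one must quote an explicit prime‑gap result with a constant $\delta$ small enough that $(1+\delta)^2<\tfrac{20}{19}$, and then make $K$ — equivalently, the size of the computer search — explicit. With any of the standard explicit bounds, $\delta$ is far below the threshold $\sqrt{20/19}-1\approx 0.026$, so the twofold application of the estimate is comfortable and $K$ stays within easy computational range; no genuinely new analytic input is needed.
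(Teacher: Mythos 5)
Your proof is correct and takes essentially the same route as the paper: an explicit Schoenfeld-type prime-gap bound (the paper uses a prime in $(x,x+\tfrac{x}{16597}]$ for $x\geq 2010760$), applied to land two primes just below $k$ for all large $k$, together with a finite computer verification over consecutive primes for the remaining range $337\leq k<K$. Your version merely spells out the double application of the gap bound and the block-by-block organization of the computation, which the paper leaves implicit.
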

\begin{proof}
A result by Schoenfeld \cite{Schoenfeld} states that there is always a prime in $(x,x+\frac{x}{16 597}]$ for all $x\geq 2 010 760$, which proves the lemma for $k\geq 2 010 760$. We have checked the result for the remaining values of $k$ with the help of a computer.
\end{proof}
This type of short interval suffices for our purposes, but it should be noted that much stronger results exist, at least asymptotically. It is known \cite{BHP} that there exists an integer $k_0$ such that for all $k>k_0$ there is a prime in the interval $[k-k^{21/40}, k]$. However, the result is ineffective - it gives no upper bound on $k_0$.

The following is a special case of Lemma 3.3 from \cite{BBOO}.
\begin{lem}\label{PrimelemmaBBOO}If $1\leq h\leq y< k$ are integers for which \[\left(\frac{k}{h}\right)^{h}\geq(k+h)^{\pi(y)},\] then one of the integers $k+1,...,k+h$ has a prime factor bigger than $y$.
\end{lem}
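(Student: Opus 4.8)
The plan is to argue by contradiction, via the classical Sylvester--Erd\H{o}s-type analysis of smooth binomial coefficients. Suppose that none of $k+1,\dots,k+h$ has a prime factor exceeding $y$, i.e.\ each of these integers is $y$-smooth. Consider $N=\binom{k+h}{h}=\frac{(k+1)(k+2)\cdots(k+h)}{h!}$, which is an integer. From $N\cdot h!=\prod_{i=1}^{h}(k+i)$ we see that every prime dividing $N$ divides $\prod_{i=1}^{h}(k+i)$, hence divides some $k+i$, hence is at most $y$. Thus $N$ is $y$-smooth and we may write $N=\prod_{p\le y}p^{e_p}$.

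The first key estimate is an upper bound for $N$. For any prime $p$ and integers $0\le j\le m$, Legendre's formula gives $v_p\!\binom{m}{j}=\sum_{i\ge 1}\bigl(\lfloor m/p^i\rfloor-\lfloor j/p^i\rfloor-\lfloor (m-j)/p^i\rfloor\bigr)$, in which every summand lies in $\{0,1\}$ and all summands with $p^i>m$ vanish; hence $v_p\!\binom{m}{j}\le\lfloor\log_p m\rfloor$. Taking $m=k+h$ and $j=h$ gives $p^{e_p}\le k+h$ for every prime $p\le y$, and multiplying over these at most $\pi(y)$ primes yields $N\le(k+h)^{\pi(y)}$.

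The second key estimate is a matching lower bound. Since $i\le h$ implies $\frac{k+i}{i}\ge\frac{k+h}{h}$, we obtain $N=\prod_{i=1}^{h}\frac{k+i}{i}\ge\left(\frac{k+h}{h}\right)^{h}>\left(\frac{k}{h}\right)^{h}$. Combining this with the hypothesis $\left(\frac{k}{h}\right)^{h}\ge(k+h)^{\pi(y)}$ gives $N>(k+h)^{\pi(y)}$, which contradicts the upper bound of the previous paragraph. Therefore the assumption fails, and some integer among $k+1,\dots,k+h$ has a prime factor larger than $y$.

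I do not anticipate a real obstacle: this is a standard divisor-counting argument, and the substantive work has been front-loaded into the hypothesis. The two points that need care are (i) using $N=\binom{k+h}{h}$ rather than the full product $(k+1)\cdots(k+h)$ as the object to bound, since it is the $p$-adic valuations of the binomial coefficient that are controlled by $\lfloor\log_p(k+h)\rfloor$; and (ii) tracking the strict inequality $\bigl(\tfrac{k+h}{h}\bigr)^{h}>\bigl(\tfrac{k}{h}\bigr)^{h}$ so that the final comparison is a genuine contradiction. Alternatively, one may simply invoke Lemma 3.3 of \cite{BBOO}, of which the statement above is the quoted special case.
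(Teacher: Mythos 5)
Your proof is correct. Note that the paper itself offers no argument for this lemma at all: it simply quotes it as a special case of Lemma 3.3 of \cite{BBOO}, which is the fallback you mention in your last sentence. What you have written is the standard self-contained justification of that cited lemma: assuming all of $k+1,\dots,k+h$ are $y$-smooth, the binomial coefficient $N=\binom{k+h}{h}$ is $y$-smooth, the Legendre/Kummer bound $p^{v_p(N)}\le k+h$ gives $N\le (k+h)^{\pi(y)}$, and the lower bound $N=\prod_{i=1}^h\frac{k+i}{i}\ge\left(\frac{k+h}{h}\right)^h>\left(\frac{k}{h}\right)^h$ together with the hypothesis yields the contradiction; your tracking of the strict inequality is exactly what makes the comparison conclusive, and the edge cases (e.g.\ $\pi(y)=0$) cause no trouble. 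So your write-up adds a complete elementary proof where the paper relies on a citation, at the cost of a few lines; both are legitimate, and nothing is missing from your argument.
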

We apply this lemma to guarantee the existence of large prime factors in the type of short intervals we need.
\begin{lem} \label{Primelemma2}
For all integers $k\geq337$ and $\frac{1}{2}\sqrt{k}\leq h \leq \frac{3k}{20}$, there exists a prime $p\geq 3h$ such that \[p \mid \prod_{i=1}^h (k+i).\]
\end{lem}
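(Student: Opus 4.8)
The goal is to apply Lemma 2.4 (PrimelemmaBBOO) with $y$ chosen to be roughly $3h$, so that the conclusion "some integer in $k+1,\dots,k+h$ has a prime factor bigger than $y$" yields a prime $p \geq 3h$ (or more precisely $p > y \geq 3h - 1$, which we then need to be $\geq 3h$). So the first thing I would do is fix $y = 3h - 1$ — actually, to be safe, let me take $y$ to be the largest integer strictly less than $3h$, i.e. $y = \lceil 3h \rceil - 1$, or simply $y = 3h$ if $3h$ is not an integer; the point is that a prime factor $p > y$ automatically satisfies $p \geq 3h$ when $y = \lceil 3h\rceil - 1$. I need $h \leq y < k$: the lower bound $h \leq y$ is immediate since $y \approx 3h \geq h$, and $y < k$ follows from $y < 3h \leq \frac{9k}{20} < k$. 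Good, the structural hypotheses of Lemma 2.4 hold for $k \geq 337$.

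The heart of the matter is verifying the inequality $\left(\frac{k}{h}\right)^h \geq (k+h)^{\pi(y)}$. I would take logarithms: I need $h \log(k/h) \geq \pi(y) \log(k+h)$. For the left side, since $h \leq \frac{3k}{20}$ we have $k/h \geq \frac{20}{3}$, so $\log(k/h) \geq \log(20/3) > 1.89$; this gives a clean lower bound $h \log(k/h) \geq 1.89\, h$. Actually I should be a little more careful: when $h$ is close to $\frac12\sqrt k$, the ratio $k/h$ is much larger (at least $2\sqrt k$), so $\log(k/h)$ grows; but the uniform bound $\log(20/3)$ suffices for the worst case $h = \frac{3k}{20}$ and that's the regime where the inequality is tightest. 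For the right side I would bound $\pi(y)$ using a Chebyshev-type estimate, e.g. $\pi(y) \leq \frac{1.3\, y}{\log y}$ for $y \geq 17$ (a standard effective bound; the paper may prefer to cite a specific version), together with $y \leq 3h$ and $\log(k+h) \leq \log(2k) = \log 2 + \log k$. Since $h \geq \frac12 \sqrt k$, we have $\log y \geq \log(\frac32\sqrt k) = \log(3/2) + \frac12 \log k$, which for large $k$ is about $\frac12 \log k$. Putting this together, the right side is at most roughly $\frac{1.3 \cdot 3h \cdot (\log 2 + \log k)}{\log(3/2) + \frac12 \log k}$, and as $k \to \infty$ this behaves like $\frac{3.9\, h \log k}{\frac12 \log k} = 7.8\, h$ — which is \emph{larger} than $1.89\, h$. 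So the naive bound $\log(k/h) \geq \log(20/3)$ is not enough; I must exploit that when $h$ is at its largest, $\log y \geq \log(3h) \geq \log(\frac{9k}{20})$ is comparable to $\log k$ (not $\frac12 \log k$), which kills the factor of $2$ discrepancy.

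So the real argument must track the ratio $\log(k+h)/\log y$ carefully across the whole range $\frac12\sqrt k \leq h \leq \frac{3k}{20}$. I expect this is where the work lies, and I would handle it by splitting into cases or by treating $\log(k/h)$ and $\log y$ as functions of the single parameter $t = h/k \in [\frac{1}{2\sqrt k}, \frac{3}{20}]$ (or better, since the worst case for the ratio is subtle, parametrize by $u$ with $h = k^u$, $u \in [1/2, 1]$ roughly). The key inequality, after dividing through by $h$, becomes
\[
\log(k/h) \;\geq\; \frac{C\, \log(k+h)}{\log y},
\]
with $C$ coming from the Chebyshev bound on $\pi(y)/y \cdot y/h \leq 3 \cdot 1.3 = 3.9$ or so. When $h \approx k^u$ with $u$ close to $1$, $\log(k/h) = (1-u)\log k$ is small but $\log y \approx u \log k$ is nearly $\log k$ and $\log(k+h) \approx \log k$, so the right side is $\approx C/u \approx C$ — a constant — while the left side $(1-u)\log k$ is bounded below since $u \leq \log(3/20)/\log(...)$... hmm, actually when $h = \frac{3k}{20}$, $u$ is essentially $1$ as $k\to\infty$, so $(1-u)\log k \to \log(20/3)$, a constant, and I'd need $\log(20/3) \geq C$ — which fails if $C \approx 3.9$. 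This tells me I must use a sharper bound on $\pi(y)$: either the bound $\pi(y) < \frac{y}{\log y - 1.1}$ (Dusart-type), giving effectively $\log(k+h)/(\log y - 1.1) \cdot 3$, or — more likely what the paper does — be cleverer. Given that the paper restricts to $k \geq 337$ and mentions computer verification for primes in short intervals, I suspect the actual proof uses a \emph{sharper} asymptotic analysis valid for large $k$ plus a finite computer check for $337 \leq k \leq$ (some bound); and the factor $3h$ (rather than, say, $2h$ or $10h$) in the statement is precisely calibrated so that the inequality $h\log(k/h) \geq \pi(y)\log(k+h)$ holds with room to spare once $\log y$ is comparable to $\log k$. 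Concretely, I would: (i) set $y = \lceil 3h \rceil - 1$ and record $p > y \Rightarrow p \geq 3h$; (ii) reduce to $h\log(k/h) \geq \pi(y)\log(k+h)$; (iii) insert an effective upper bound for $\pi(y)$ such as $\pi(y) \leq 1.25506\, y/\log y$ (Rosser–Schoenfeld) and the trivial $y < 3h$, $k+h < 2k$; (iv) reduce to showing $\log(k/h) \cdot \log(3h) \geq 3.77\,\log(2k)$ (absorbing constants); (v) treat the two extremes $h = \frac12\sqrt k$ and $h = \frac{3k}{20}$, check the inequality is weakest at one endpoint by a monotonicity/convexity argument in $h$, and verify it there for $k$ large; (vi) dispatch the finitely many remaining $k$ by computer, as the paper does for its other lemmas. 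The main obstacle, as the scratch work shows, is step (v): the inequality genuinely becomes tight near $h = \frac{3k}{20}$ and one needs the precise Chebyshev constant and the precise value $337$ (and perhaps the precise coefficient $3$ in "$p \geq 3h$") to make it close; a cruder bound on $\pi(y)$ or a larger allowed range of $h$ would break it.
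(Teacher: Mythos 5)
Your plan has a genuine gap at the top of the $h$-range. Your framework --- apply Lemma~\ref{PrimelemmaBBOO} with $y$ essentially equal to $3h$, insert the Rosser--Schoenfeld bound $\pi(y)\leq 1.25506\,y/\log y$, reduce to an inequality of the shape $\log(3h)\log(k/h)\gtrsim 3.77\log k$, check it at the endpoints of the $h$-interval (the left side is indeed a quadratic in $\log h$ with negative leading coefficient, which is exactly the paper's endpoint argument), and dispatch small $k$ by computer --- works only when $h$ is a sufficiently small fraction of $k$. As your own scratch computation shows, when $h=ck$ with $c$ fixed the hypothesis of Lemma~\ref{PrimelemmaBBOO} asymptotically requires $h\log(1/c)\geq \pi(3h)\log(k+h)\sim 3h$, i.e.\ $\log(1/c)\geq 3$ (and $\geq 3.77$ with the stated constant); for $c=\tfrac{3}{20}$ one has $\log(20/3)\approx 1.90$, so the inequality is \emph{false} for all large $k$ at the endpoint $h=\tfrac{3k}{20}$. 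This cannot be repaired by a sharper estimate for $\pi$ (Dusart-type or otherwise), by adjusting $337$, or by recalibrating the factor $3$: the hypothesis of the lemma fails there even with the exact value of $\pi(y)$, so your step (v) cannot be carried out on the full range.

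The missing idea is a case split in $h$. The paper runs the Lemma~\ref{PrimelemmaBBOO} argument only for $\tfrac12\sqrt k\leq h\leq\tfrac{3k}{1000}$ (where $\log(k/h)\geq\log(1000/3)>3.77$ leaves room), using precisely your endpoint/concavity observation on that restricted interval, valid for $k\geq 2010760$. For $\tfrac{3k}{1000}\leq h\leq\tfrac{3k}{20}$ no large-prime-factor lemma is needed at all: the interval $[k+1,k+h]$ has length at least $\tfrac{3k}{1000}>\tfrac{k}{16597}$, so Schoenfeld's prime-in-short-interval theorem yields an actual prime $p\in[k+1,k+h]$, and this prime trivially satisfies $p>k\geq\tfrac{20}{3}h>3h$. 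The remaining range $337\leq k\leq 2010760$ is then verified by computer (the paper checks the stronger statement that, with $51$ exceptional $k$ handled directly, there is a prime in $[k+1,k+\lfloor\sqrt k/2\rfloor]$), which matches your step (vi). Without the split --- in particular without noticing that for $h$ comparable to $k$ one should find a prime in the interval itself rather than a large prime factor --- the proof does not go through.
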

\begin{proof} If $k\geq 2010760$, and $h\leq\frac{3k}{1000}$, we use Lemma~\ref{PrimelemmaBBOO}, with $y=3h$. We plug in the inequality $\pi(y)\leq 1.25506\frac{y}{\log(y)}$ proved by Schoenfeld in \cite{Schoenfeld}. It suffices to show that \[\left(\frac{k}{h}\right)^{h}>\left(\frac{1003}{1000}k\right)^{3.77\frac{h}{\log(3h)}}.\]

Upon taking logarithms, we need to show that \[\log(3h)\log(k/h)>3.77\log(k)+3.77\log(\frac{1003}{1000}).\]

The left hand side is a quadratic function in $\log(h)$ with negative leading coefficient, thus in any interval the minimum value is in one of the endpoints. Thus we check that the equality holds for $h=\frac{1}{2}\sqrt{k}$ and for $h=\frac{3k}{1000}$. Putting $h=\frac{1}{2}\sqrt{k}$, we need to show that \[(\log(3/2)+\frac{1}{2}\log(k))(\log(2)+\frac{1}{2}\log(k))>3.77\log(k)+3.77\log(\frac{1003}{1000}).\]
This inequality holds whenever $\log(2)+\frac{1}{2}\log(k)>7.54$, which holds from $k\geq885322$.
Setting $h=\frac{3k}{1000}$, we need to show that \[\log(\frac{9}{1000}k)\log(\frac{1000}{3})>3.77\log(k)+3.77\log(\frac{1003}{1000}),\]
which is valid for $k\geq676712 $. This proves the lemma for all $k\geq 2010760$, and $h\leq\frac{3k}{1000}$. 

If $k\geq 2010760$ and $\frac{3k}{1000}\leq h\leq\frac{3k}{20}$, we have the stronger result by Schoenfeld \cite{Schoenfeld} which tells us that there is a prime in $(x,x+\frac{x}{16597}]$ for $x\geq 2010760$, which implies us that there is a prime $p$ in $[k+1,k+h]$, which is automatically bigger than $3h$. This proves the lemma for all $k\geq 2010760$.

In the remaining range of $337\leq k\leq 2010760 $ we have checked the statement of the lemma with the help of a computer. In fact, a much stronger statement holds in this range of $k$. For each $337\leq k\leq 2010760 $, with $51$ exceptions, there is a prime in the interval $[k+1, k+\lfloor\sqrt{k}/2\rfloor]$. We run through the prime pairs $(p_n,p_{n+1})$ and check that $p_n+\sqrt{p_n}/2\geq p_{n+1}$, so that for all $k\in [p_n,p_{n+1}), p_{n+1}\in[k+1,k+\lfloor\sqrt{k}/2\rfloor]$. For the exceptional values of $k$ we check the lemma directly.
\end{proof}
We need to include the possibility of $h$ being as big as a fraction of $k$ because the primes we get from Lemma~\ref{Primelemma1} could be smaller than $k$ by a linear term in $k$. Much stronger results are available in shorter intervals, for example Ramachandra proved \cite{Rama} that for a certain $\alpha<1/2$, there is an integer in the interval $[x, x+x^\alpha]$ with a prime factor as big as $x^{15/26}$. The strongest result to date of this kind is \cite{Sharp}, namely the existence of a prime factor of magnitude $x^{0.738...}$ in the interval $[x,x+x^{1/2}]$.

We will repeatedly make use of the following two lemmata.
\begin{lem}\label{Factoriallemma}For all nonnegative integers $x,y$ such that $x-y$ is nonnegative, we have \[(x+y)!(x-y)! \leq x!^2 \frac{e^2}{2\pi}\min(e^{\frac{x+y}{x-y}\frac{y^2}{x}},e^{2\frac{y^2}{x}}).\]
\end{lem}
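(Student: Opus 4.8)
The plan is to use Stirling's formula with explicit error bounds to control the ratio $(x+y)!(x-y)!/x!^2$ and then to estimate the resulting elementary expression. Recall Robbins' bounds: for every positive integer $m$,
\[
\sqrt{2\pi m}\,\left(\frac{m}{e}\right)^m e^{\frac{1}{12m+1}} \le m! \le \sqrt{2\pi m}\,\left(\frac{m}{e}\right)^m e^{\frac{1}{12m}}.
\]
Applying the upper bound to $(x+y)!$ and $(x-y)!$ and the lower bound (squared) to $x!^2$, the powers of $e$ in the denominators and numerators combine, the factor $\sqrt{2\pi}$ contributes $\sqrt{2\pi(x+y)}\sqrt{2\pi(x-y)}/(2\pi x)=\sqrt{(x^2-y^2)}/x\le 1$, and the error terms contribute at most $e^{1/(12(x+y))+1/(12(x-y))}\le e^{1/6}$, which is comfortably below $e^2/(2\pi)$... so the bulk of the work is the main term. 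One must handle the degenerate cases $x=y$ (where $(x-y)!=0!=1$ and the bound is easily checked directly, since $e^2/(2\pi)>1$ and the relevant exponential is $\ge 1$) and $x=y=0$ separately, as Stirling's bounds require a positive argument.

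The heart of the matter is therefore to show that the main Stirling term
\[
\frac{(x+y)^{x+y}(x-y)^{x-y}}{x^{2x}}
\]
is at most $\min\!\left(e^{\frac{x+y}{x-y}\frac{y^2}{x}},\,e^{2y^2/x}\right)$. Taking logarithms, writing $x+y = x(1+t)$ and $x-y=x(1-t)$ with $t=y/x\in[0,1)$, the left side becomes $x\bigl[(1+t)\log(1+t)+(1-t)\log(1-t)\bigr]$. I would expand this using the series $(1+t)\log(1+t)+(1-t)\log(1-t) = \sum_{m\ge 1}\frac{t^{2m}}{m(2m-1)} = t^2 + \frac{t^4}{6}+\cdots$, which is a standard even power series with positive coefficients. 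For the second (simpler) bound $e^{2y^2/x}=e^{2xt^2}$ one needs $\sum_{m\ge1}\frac{t^{2m}}{m(2m-1)}\le 2t^2$; since the series equals $t\log\frac{1+t}{1-t}$ and $\log\frac{1+t}{1-t}\le \frac{2t}{1-t^2}$, it suffices that $\frac{2}{1-t^2}\le 2$... which is false, so instead I would use the cruder bound term by term: $\frac{1}{m(2m-1)}\le 1$ for $m\ge1$ gives $\sum \frac{t^{2m}}{m(2m-1)}\le \frac{t^2}{1-t^2}$, and then the factor $\frac{1}{1-t^2}$ is exactly $\frac{x^2}{x^2-y^2}=\frac{x+y}{x-y}\cdot\frac{x}{x+y}\le \frac{x+y}{x-y}$ when... here I must be careful. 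Let me restructure: the clean identity is
\[
(1+t)\log(1+t)+(1-t)\log(1-t) = t\log\frac{1+t}{1-t},
\]
and bounding $\log\frac{1+t}{1-t}\le \frac{2t}{1-t^2}$ (which holds for $t\in[0,1)$ by comparing derivatives) yields left side $\le \frac{2xt^2}{1-t^2}=\frac{2y^2/x}{1-y^2/x^2}=\frac{2y^2}{x}\cdot\frac{x^2}{x^2-y^2}=\frac{x+y}{x-y}\cdot\frac{2y^2}{x}\cdot\frac{x}{x+y}$. That gives a bound $\frac{x+y}{x-y}\cdot\frac{2y^2}{x+y}$, i.e. $\frac{2y^2}{x-y}$, which is weaker than claimed; so the sharper route is to use the two-sided estimate $t\le \tanh^{-1}$-type bounds more carefully. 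I expect the honest argument is: the series $\sum_{m\ge1}\frac{t^{2m}}{m(2m-1)}$ is, term by term, at most $\sum_{m\ge1}\frac{t^{2m}}{2m-1}\cdot\frac{1}{1}$ — rather, since $\frac{1}{m(2m-1)}\le \frac{2}{2m-1}\cdot\frac12$...

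The cleanest path, and the one I would actually write, is this: bound each coefficient by $\frac{1}{m(2m-1)}\le \frac{1}{2m-1}$ for the first factor (giving $t\cdot\frac12\log\frac{1+t}{1-t}\le \frac{t^2}{1-t^2}=\frac{y^2}{x^2}\cdot\frac{1}{1-y^2/x^2}=\frac{y^2}{x^2-y^2}$, hence left side $\le \frac{xy^2}{x^2-y^2}=\frac{x+y}{x-y}\cdot\frac{y^2}{x}$), which is exactly the first claimed bound; and separately, for the second bound, use $\frac{1}{m(2m-1)}\le 1$ only for $m=1$ and absorb the higher terms when $t$ is small, while for $t$ bounded away from $0$... actually no — the uniform bound $(1+t)\log(1+t)+(1-t)\log(1-t)\le 2t^2$ for all $t\in[0,1)$ is simply \emph{false} near $t=1$ (the left side $\to 2\log 2\approx 1.386$ while $2t^2\to 2$, so it is in fact true there!) — checking: at $t=1$, LHS $=2\log2\approx1.386<2$; at $t=0$ both are $0$ with matching second derivative; the difference $g(t)=2t^2-t\log\frac{1+t}{1-t}$ has $g(0)=0$, $g'(0)=0$, $g''(0)=0$, and one checks $g'''>0$ on $(0,1)$, so $g\ge0$. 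Thus the second bound follows from this elementary calculus fact about $g$.

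To summarize the steps I would execute: (1) dispose of $x=y$ and $x=y=0$ directly; (2) for $x>y\ge0$ apply Robbins' Stirling bounds and verify the constant $e^2/(2\pi)$ swallows all error terms; (3) reduce to the inequality $(1+t)\log(1+t)+(1-t)\log(1-t)\le \min(\frac{t^2}{1-t^2},2t^2)$ for $t=y/x\in[0,1)$, multiplied by $x$; (4) prove the first bound via the power series $\sum_{m\ge1}\frac{t^{2m}}{m(2m-1)}$ and the coefficient estimate $\frac{1}{m(2m-1)}\le\frac{1}{2m-1}$ together with $\sum\frac{t^{2m}}{2m-1}=\frac{t}{2}\log\frac{1+t}{1-t}\le\frac{t^2}{1-t^2}$; (5) prove the second bound by showing $g(t)=2t^2-t\log\frac{1+t}{1-t}\ge0$ on $[0,1)$ via $g(0)=g'(0)=0$ and $g''(t)\ge0$ (equivalently the power-series comparison $\frac{1}{m(2m-1)}\le 2$ for all $m\ge1$, which is immediate). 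The main obstacle is bookkeeping in step (2)–(3): one must make sure the $\sqrt{2\pi}$ factor goes the right way (it does, since $\sqrt{x^2-y^2}\le x$) and that no error term is lost when $y$ is close to $x$, where $1/(12(x-y))$ could be as large as $1/12$ when $x-y=1$; but $e^{1/12+1/12}=e^{1/6}\approx1.18$, still far below $e^2/(2\pi)\approx1.176$ — dangerously close, so I would in fact use the sharper lower Robbins bound $e^{1/(12m+1)}$ on $x!$, or note that when $x-y=1$ the claimed exponential $e^{(x+y)y^2/((x-y)x)}$ is enormous and the inequality is trivial, thereby sidestepping the tight case.
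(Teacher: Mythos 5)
Your overall strategy (Stirling-type bounds with explicit constants, then an elementary estimate of the main term $(x+y)^{x+y}(x-y)^{x-y}/x^{2x}$) is the same as the paper's; the paper simply uses the cruder pair $\sqrt{2\pi}\,n^{n+1/2}e^{-n}\le n!\le e\,n^{n+1/2}e^{-n}$, which produces the constant $e^2/(2\pi)$ immediately with the slack $\sqrt{x^2-y^2}/x\le 1$ and no error-term bookkeeping, and it proves both exponential bounds in two lines by writing the main term as $\bigl(1-\tfrac{y^2}{x^2}\bigr)^x\bigl(1+\tfrac{2y}{x-y}\bigr)^y$ resp.\ $\bigl(1+\tfrac{y}{x}\bigr)^{x+y}\bigl(1-\tfrac{y}{x}\bigr)^{x-y}$ and applying $1\pm u\le e^{\pm u}$. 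Your treatment of the \emph{first} bound is fine (indeed $\sum_{m\ge1}\tfrac{t^{2m}}{m(2m-1)}\le\tfrac{t}{2}\log\tfrac{1+t}{1-t}\le\tfrac{t^2}{1-t^2}$, and $x\cdot\tfrac{t^2}{1-t^2}=\tfrac{xy^2}{x^2-y^2}\le\tfrac{x+y}{x-y}\tfrac{y^2}{x}$).

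However, your proof of the \emph{second} bound $e^{2y^2/x}$ has a genuine error. The ``clean identity'' $(1+t)\log(1+t)+(1-t)\log(1-t)=t\log\tfrac{1+t}{1-t}$ is false: the left side equals $\log(1-t^2)+t\log\tfrac{1+t}{1-t}$. Consequently your function $g(t)=2t^2-t\log\tfrac{1+t}{1-t}$ is \emph{not} nonnegative on $(0,1)$ --- in fact $t\log\tfrac{1+t}{1-t}=\sum_{m\ge1}\tfrac{2t^{2m}}{2m-1}=2t^2+\tfrac{2t^4}{3}+\cdots\ge 2t^2$ for every $t$, so $g\le 0$ with equality only at $t=0$; your check ``at $t=1$ the LHS is $2\log 2$'' silently substitutes the true left side for $t\log\tfrac{1+t}{1-t}$, which diverges. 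This matters because the regime where the $e^{2y^2/x}$ term is the operative one in the $\min$ is exactly $y$ close to $x$ (where your claimed $g\ge0$ fails worst); for the same reason your proposal to ``sidestep'' the tight Stirling case $x-y=1$ by noting the first exponential is enormous does not work, since you must still verify the bound against the smaller second exponential there. The needed inequality $(1+t)\log(1+t)+(1-t)\log(1-t)\le 2t^2$ is true and easy to repair: either use the paper's one-line estimate $\bigl(1+\tfrac{y}{x}\bigr)^{x+y}\bigl(1-\tfrac{y}{x}\bigr)^{x-y}\le e^{\frac{y}{x}(x+y)-\frac{y}{x}(x-y)}$, or use $t^{2m}\le t^2$ together with $\sum_{m\ge1}\tfrac{1}{m(2m-1)}=2\log 2<2$ (note that your coefficientwise bound $\tfrac{1}{m(2m-1)}\le 2$ only yields $\tfrac{2t^2}{1-t^2}$, which is not enough). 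With that repair, and with the Robbins error terms checked as you indicate (they do fit under $e^2/(2\pi)$ once one notes the worst case $1/12+1/12$ occurs only at $x=1$, $y=0$), your argument goes through.
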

\begin{proof}
We use the inequalities $\sqrt{2\pi}n^{n+1/2}e^{-n} \leq n!\leq en^{n+1/2}e^{-n},$ combined with the following computations
\begin{align*}
\frac{(x+y)^{x+y}(x-y)^{x-y}}{x^{2x}} &= \frac{(x^2-y^2)^x}{x^{2x}}\left(\frac{x+y}{x-y}\right)^y\\
 &= \left(1-\frac{y^2}{x^2}\right)^x \left(1+\frac{2y}{x-y}\right)^y\\
 &\leq e^{-\frac{y^2}{x}+\frac{2y^2}{x-y}}=e^{\frac{x+y}{x-y}\frac{y^2}{x}},
\end{align*}
\begin{align*}
\frac{(x+y)^{x+y}(x-y)^{x-y}}{x^{2x}} &= (1+\frac{y}{x})^{x+y}(1-\frac{y}{x})^{x-y}\\
 &\leq e^{\frac{y}{x}(x+y)-\frac{y}{x}(x-y)}=e^{2\frac{y^2}{x}}.\qedhere
\end{align*}
\end{proof}

\begin{lem}\label{factorialdivides} Let $\lambda$ be a partition of $n$. Let $h$ be the hook length of a box not on position $(1,1)$. If $2h-n\geq 0$, then $(2h-n)!$ divides the hook product $\Pi(\lambda)$.
\end{lem}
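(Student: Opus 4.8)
The plan is to reduce the whole lemma to one elementary number-theoretic fact:

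\emph{Claim.} If $a_1 < a_2 < \cdots < a_N$ are distinct positive integers and $E := \sum_{k=1}^{N}(a_k-k)$ satisfies $E \le N$, then $(N-E)!$ divides $\prod_{k=1}^{N} a_k$.

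The proof of the Claim is immediate: since $a_k \ge k$ always, and since $a_k = k$ forces $a_1=1,\dots,a_k=k$, there is a $p \ge 0$ with $a_k=k$ precisely for $k \le p$; for $k>p$ one has $a_k \ge k+1$, so $E \ge N-p$, i.e.\ $p \ge N-E$; then $\prod_k a_k = p!\cdot\prod_{k>p}a_k$ is divisible by $p!$ and hence by $(N-E)!$.

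For the lemma itself, write the box as $(i,j)$, with hook length $h$. If $2h-n=0$ then $(2h-n)!=1$ and there is nothing to prove, so assume $2h>n$. I would first observe that this forces the box onto the first row or the first column: if instead $i\ge 2$ and $j\ge 2$, then the hook of $(i,j)$ lies entirely in rows $\ge 2$ and columns $\ge 2$, whereas the hook of $(1,1)$ is exactly the first row together with the first column, so these two hooks are disjoint subsets of the diagram; since an easy monotonicity check shows $h(\lambda)_{(1,1)}=\lambda_1+\lambda_1^\ast-1$ is the largest hook length of $\lambda$, we get $n \ge h(\lambda)_{(1,1)}+h \ge 2h$, contradicting $2h>n$. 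Conjugating $\lambda$ if necessary — which changes neither $n$, nor $\Pi(\lambda)$, nor $h$, nor the property of not being $(1,1)$ — I may therefore assume the box is $(i,1)$ with $i\ge 2$.

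Now let $s$ be the number of parts of $\lambda$. Then $h=h(\lambda)_{(i,1)}=\lambda_i+s-i$, and the hook lengths of the leg boxes $(i+1,1),\dots,(s,1)$ are $N:=s-i$ distinct positive integers whose product divides $\Pi(\lambda)$; written in increasing order they are $a_k := h(\lambda)_{(s-k+1,1)} = \lambda_{s-k+1}+k-1 \ge k$ for $k=1,\dots,N$. Hence $E := \sum_{k}(a_k-k) = \sum_{r=i+1}^{s}(\lambda_r-1)$, so $N-E = 2(s-i)-\sum_{r=i+1}^{s}\lambda_r$, and a direct computation gives
$2h-n-(N-E) = 2\lambda_i-\sum_{r=1}^{s}\lambda_r+\sum_{r=i+1}^{s}\lambda_r = \lambda_i-\sum_{r=1}^{i-1}\lambda_r \le 0$,
since $i\ge 2$ and $\lambda_1\ge\lambda_i$. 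Thus $0 \le 2h-n \le N-E$, in particular $E\le N$, and the Claim yields $(N-E)! \mid \prod_k a_k \mid \Pi(\lambda)$; since $2h-n\le N-E$ we conclude $(2h-n)! \mid \Pi(\lambda)$.

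The only genuine ingredients are the elementary Claim and the bookkeeping identity $2h-n-(N-E)=\lambda_i-\sum_{r<i}\lambda_r$; I expect the mild obstacle to be getting this identity and the resulting inequality $2h-n\le N-E$ exactly right, together with the initial observation that boxes off the first row and column contribute nothing (it is this observation that puts the box into the first column, where its leg hook lengths take the clean form $\lambda_{s-k+1}+k-1$ used above).
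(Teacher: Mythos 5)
Your proof is correct. It is related in spirit to the paper's argument but routed differently. The paper never needs to show the box lies in the first row or column: since hook lengths only increase when either coordinate decreases, and conjugation preserves the hook product, it simply replaces the given box by $(1,2)$ with hook length $h'\geq h$, then a direct box count ($2(a-1)+\lambda_1+\lambda_2\leq n$, where $a$ is the leg length of $(1,2)$) gives $t:=\lambda_1-\lambda_2\geq 2h'-n\geq 2h-n$, and the boxes $(1,k)$ with $\lambda_2<k\leq\lambda_1$ visibly have hook lengths $t,t-1,\dots,1$, so $t!\mid\Pi(\lambda)$. You instead keep the original box, first prove (via the disjointness of its hook from the hook of $(1,1)$ and maximality of $h(\lambda)_{(1,1)}$) that $2h>n$ forces it into the first row or column, conjugate it into the first column, and then feed the leg hook lengths into your elementary claim about distinct integers of bounded excess; unwinding the claim, the integer $p$ there counts the bottom rows of size $1$, whose first-column hook lengths are $1,2,\dots,p$ with $p\geq N-E\geq 2h-n$. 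So both proofs ultimately exhibit a consecutive run $1,\dots,m$ of hook lengths with $m\geq 2h-n$ — at the end of the first row in the paper, at the bottom of the first column for you. What the paper's monotonicity trick buys is the elimination of your first-row-or-column step; what your route buys is a clean, reusable numerical claim in place of the paper's ad hoc box-counting inequality. Your bookkeeping identity $2h-n-(N-E)=\lambda_i-\sum_{r<i}\lambda_r\leq 0$ checks out, as do the edge cases ($2h=n$, and $N=0$, which is excluded when $2h>n$ by that same inequality).
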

\begin{proof} Since the hook length increases when decreasing either of the coordinates of a box, and since conjugate partitions have the same hook product, we may assume that the box $(1,2)$ has hook length $h'\geq h$. Let $t=\lambda_1-\lambda_2$ and let $a$ be the leg length of the box $(1,2)$. Then $h'=a+\lambda_1-1$. The total number of boxes is at most $n$, so \begin{align*}
&2(a-1)+\lambda_1+\lambda_2\leq n\\
\Longleftrightarrow\quad &2h'-\lambda_1+\lambda_2\leq n\\
\Longleftrightarrow\quad &t\geq 2h'-n\geq 2h-n.
\end{align*}

It now suffices to note that the hook lengths of the $t$ boxes in positions $\{(1,k)\mid\lambda_2<k\leq \lambda_1\}$ are $t,t-1,\cdots,1$.\end{proof}

The following three lemmata will be useful in bounding hook products of partitions.
\begin{lem}\label{Productlemma}Let $N$ be a natural number. For any tuple $(c_1,c_2,\cdots ,c_k)$ of positive integers, we have \[\prod_{j=1}^k(N-j+c_j)\leq\frac{N}{N-\sum_{j=1}^k c_j}\prod_{j=1}^k(N-j).\]
\end{lem}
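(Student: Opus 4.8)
The plan is to reduce the claimed inequality to a telescoping product. First I would introduce the partial sums $s_0 = 0$ and $s_j = c_1 + \dots + c_j$; since the $c_i$ are positive integers, $s_j \ge j$ for every $j$. I would also note that the statement tacitly requires $s_k = \sum_{j=1}^k c_j < N$, since otherwise the right-hand side is not a positive quantity; under this hypothesis $0 < N - s_k \le N - s_j$, and because $s_j \ge j$ we also get $0 < N - s_j \le N - j$, so every factor occurring below is positive and inequalities between such quotients multiply in the usual way.

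The heart of the argument would be the pointwise estimate
\[
\frac{N-j+c_j}{N-j} \;\le\; \frac{N-s_{j-1}}{N-s_j}, \qquad 1 \le j \le k .
\]
Clearing the positive denominators, this is equivalent to $(N-j+c_j)(N-s_j) \le (N-j)(N-s_{j-1})$. Setting $a = N-j$ and $b = N - s_{j-1}$ and using $N - s_j = b - c_j$, the inequality becomes $(a+c_j)(b-c_j) \le ab$, i.e. $c_j\bigl(b - a - c_j\bigr) \le 0$, i.e. $b - a \le c_j$. But $b - a = (N - s_{j-1}) - (N-j) = j - s_{j-1} \le j - (j-1) = 1 \le c_j$, which proves the pointwise estimate.

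Finally I would multiply these estimates over $j = 1, \dots, k$, so that the right-hand side telescopes:
\[
\prod_{j=1}^k \frac{N-j+c_j}{N-j} \;\le\; \prod_{j=1}^k \frac{N - s_{j-1}}{N - s_j} \;=\; \frac{N - s_0}{N - s_k} \;=\; \frac{N}{\,N - \sum_{j=1}^k c_j\,},
\]
and then clear the denominator by multiplying through by $\prod_{j=1}^k (N-j) > 0$. I do not expect a real obstacle here: the only idea is to spot the correct telescoping via the partial sums $s_j$, after which the per-factor inequality collapses to the trivial observation $j - s_{j-1} \le 1$ coming from $c_i \ge 1$; the one point deserving a word of care is the positivity of all the denominators, which is precisely why one works under the implicit assumption $\sum_{j=1}^k c_j < N$.
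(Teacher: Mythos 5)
Your proof is correct, and it takes a genuinely different route from the paper. The paper argues by induction on the shape of the tuple: it starts from $(1,1,\cdots,1)$, where equality holds, and shows that merging the last entry into an earlier one (an operation from which every tuple of positive integers is reachable) preserves the inequality, the per-step verification reducing to the cross-multiplication $i \le k + c_i$. You instead prove the single-factor bound $\frac{N-j+c_j}{N-j} \le \frac{N-s_{j-1}}{N-s_j}$ with $s_j = c_1+\cdots+c_j$, which after clearing denominators collapses to $j - s_{j-1} \le c_j$ (trivial since $s_{j-1} \ge j-1$ and $c_j \ge 1$), and then telescopes the right-hand sides. Your version is non-inductive and more direct, and it has the additional merit of making explicit the tacit hypothesis $\sum_j c_j < N$ (equivalently $N - s_j \ge N - s_k > 0$ and $N - j \ge N - s_j > 0$), which is exactly what guarantees that all denominators are positive so that the per-factor inequalities can be multiplied; the paper leaves these positivity issues implicit, as the lemma is only ever applied in that regime. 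The paper's merge-induction buys essentially nothing extra here beyond exhibiting the all-ones tuple as the equality case, which your telescoping identity also shows at a glance.
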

\begin{proof}
When $(c_1,c_2,\cdots ,c_k)=(1,1,\cdots,1)$, we have equality for every $k$. For the induction step, assume that the statement holds for the tuple $(c_1,c_2,\cdots, c_k)$. We will show that the statement then also holds for the tuple $(c_1,c_2,\cdots,c_i+c_k,\cdots ,c_{k-1})$. Note that any tuple of positive integers can be reached using this operation multiple times, starting from a tuple $(1,1,\cdots,1)$.

In order to show the induction step, it suffices to check that \[\frac{N-i+c_i+c_k}{(N-k+c_k)(N-i+c_i)}\leq \frac{1}{N-k}.\]

Indeed, this reduces to \begin{align*}
(N-k)(N-i+c_i+c_k)&\leq(N-k+c_k)(N-i+c_i)\\
\Longleftrightarrow c_k(N-k)&\leq c_k(N-i+c_i)\\
\Longleftrightarrow i &\leq k+c_i. 
\qedhere
\end{align*}
\end{proof}

\begin{lem}\label{TwoPartitions1}Let $\lambda=(A+1,1^B)$ be a hook partition. Let $\tau$ be a partition of $t$ with first part $\tau_1\leq A$ and with number of parts $\tau_1^{*}\leq B$. Let $\mu$ be the partition obtained by inserting the Young diagram of $\tau$ into positions $\{(i,j)\mid 2\leq i \leq \tau_1+1, 2\leq j\leq\tau_1^{*}+1\}$ of the Young diagram of $\lambda$. Then
\[\Pi(\mu)\leq \Pi(\lambda)\Pi(\tau) \frac{A+1}{A+1-t}\frac{B+1}{B+1-t}.\]
\end{lem}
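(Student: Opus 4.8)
The plan is to track exactly which hook lengths change when the diagram of $\tau$ is inserted into the arm-and-leg region of the hook $\lambda=(A+1,1^B)$, and then to bound the resulting product using Lemma~\ref{Productlemma}. First I would set up coordinates: the boxes of $\mu$ split into three groups. The boxes inside the inserted copy of $\tau$ (rows $2,\dots,\tau_1+1$, columns $2,\dots,\tau_1^{*}+1$) have hook lengths in $\mu$ that are strictly larger than their hook lengths in $\tau$, because the arm of such a box is extended by the single box in column $1$ of its row (part of $\lambda$'s leg) and its leg is extended by the single box in row $1$ of its column (part of $\lambda$'s arm)? No—more carefully, a box of $\tau$ at $\tau$-position $(i,j)$ sits at $\mu$-position $(i+1,j+1)$, and its $\mu$-arm and $\mu$-leg pick up contributions only from the other boxes of $\tau$ together with the first row and first column of $\lambda$; I would write $h(\mu)_{(i+1,j+1)} = h(\tau)_{(i,j)} + (\text{extra arm}) + (\text{extra leg})$ and observe the extra arm length equals $A-\tau_1$ if the box's row in $\mu$ extends only within the $\tau$-block (it does, since row $i+1$ of $\mu$ has length $\tau_i+1\le \tau_1+1$, while row $1$ has length $A+1$), hmm, that's not right either since the first row is far above. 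Let me restate the clean decomposition I actually intend to use.

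The three groups of boxes of $\mu$ are: (a) the corner box $(1,1)$ and the boxes of the first row and first column of $\lambda$ lying outside the $\tau$-block; (b) the boxes of the first row and first column of $\lambda$ that are "above" or "left of" the $\tau$-block (i.e. columns $2,\dots,\tau_1^{*}+1$ of row $1$, and rows $2,\dots,\tau_1+1$ of column $1$); (c) the $t$ boxes of the inserted $\tau$-block itself. For group (c), the $\mu$-hook length of the box at $\tau$-coordinate $(i,j)$ is exactly $h(\tau)_{(i,j)}+1$, the $+1$ coming from the single box directly above it in row $1$ together with the single box directly left of it in column $1$—no, it picks up one box from each, so $+2$; I will recompute and find the precise constant, then bound $\prod_{(c)} h(\mu) \le \Pi(\tau)\cdot(\text{small factor})$. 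For the boxes in group (b), say the box $(1,j)$ with $2\le j\le \tau_1^{*}+1$: its hook length grows from its $\lambda$-value $B+A+2-j$ (arm length $A+1-j$, leg length $B$) to a $\mu$-value that is larger by the number of boxes added below it in column $j$, which is $\tau^{*}_{j-1}$ (the length of column $j-1$ of $\tau$). Similarly for the relevant boxes of column $1$. Group (a) boxes are unchanged. Multiplying everything, $\Pi(\mu) = \Pi(\lambda)\cdot\Pi(\tau)\cdot R$ where $R$ is a ratio of shifted products, and the target bound asserts $R \le \frac{A+1}{A+1-t}\cdot\frac{B+1}{B+1-t}$.

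The main work, and the main obstacle, is bounding $R$. I expect $R$ to factor (or split into two symmetric pieces, one "horizontal" coming from the first row of $\lambda$ and the $\tau$-rows, one "vertical" coming from the first column). For the horizontal piece: the hook lengths of the boxes $(1,2),\dots,(1,\tau_1^{*}+1)$ of $\mu$ are $(B+A+2-j) + \tau^{*}_{j-1}$ for $j=2,\dots,\tau_1^{*}+1$, i.e. of the form $N - (j{-}1) + c_{j-1}$ with $N = A+B+1$ (the hook length of $(1,1)$, which itself only appears in group (a) and is fine) and $c_{j-1} = \tau^{*}_{j-1} + B + 1$; meanwhile in $\lambda$ the same boxes contribute $N-(j-1)+(B+1)$ … I will need to choose the indexing so that Lemma~\ref{Productlemma} applies with $\sum c_j$ controlled by $t$. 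Concretely, after dividing the $\mu$-product over these boxes by the $\lambda$-product over these boxes, the surplus is $\prod \frac{N-(j-1)+\tau^{*}_{j-1}+(\text{const})}{N-(j-1)+(\text{const})}$, and applying Lemma~\ref{Productlemma} (with $N$ replaced by an appropriate shifted value near $A+1$, and the $c_j$ summing to $t = |\tau| = \sum_j \tau^{*}_j$) yields a bound of the shape $\frac{A+1}{A+1-t}$. The vertical piece is handled identically with $A\leftrightarrow B$. The delicate points will be (i) getting the shifted base value in Lemma~\ref{Productlemma} to be exactly $A+1$ (resp. $B+1$) rather than something slightly larger, which may require grouping the group-(b) boxes with the appropriate first-row/first-column boxes of the $\tau$-block or re-associating the corner contribution, and (ii) checking that the inequalities $\tau_1\le A$, $\tau_1^{*}\le B$ guarantee all denominators $A+1-t$, $B+1-t$ are positive so the bound is meaningful—this should follow since $t\le \tau_1\tau_1^{*}$ but in fact one needs $t < A+1$ and $t < B+1$, which hold because $\tau$ fits in an $A\times B$ box forces $t \le AB$; more carefully $t\le \tau_1^{*}\cdot\tau_1 \le B\cdot A$, and separately $t \ge \tau_1$ only, so I should assume (or note it is implied by the application) that $t\le A$ and $t\le B$, or else interpret the statement as vacuous when a denominator is non-positive. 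I would flag that subtlety and otherwise carry out the two symmetric Lemma~\ref{Productlemma} applications.
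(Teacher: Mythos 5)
Your overall strategy is the paper's: split the boxes of $\mu$ into the corner, the first row, the first column, and the inserted $\tau$-block, track how each hook length changes, and feed the changed products into Lemma~\ref{Productlemma} twice (once with $N$ near $A+1$, once near $B+1$). But two of the key hook-length computations on which this rests are wrong or left unresolved, and as written the argument does not close. First, the boxes of the inserted $\tau$-block do \emph{not} gain anything: a block box sitting at $\mu$-position $(i+1,j+1)$ has in its row only the column-$1$ box to its \emph{left} and in its column only the row-$1$ box \emph{above} it, so neither enters its arm or leg, and its $\mu$-hook length equals its $\tau$-hook length exactly. You waver between ``$+1$'' and ``$+2$'' and defer the computation; if the block really contributed $\Pi(\tau)$ times a nontrivial extra factor, the stated bound would not leave room for it, so this point must be settled (in the correct direction) rather than postponed. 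Second, your $\lambda$-hook lengths for the first-row boxes are wrong: in the hook $\lambda=(A+1,1^B)$ the box $(1,j)$ with $j\geq 2$ has leg length $0$, hence hook length $A+2-j$, not $A+B+2-j$. The spurious ``$+B$'' is what forces you into constants $c_{j-1}=\tau^*_{j-1}+B+1$ and the unresolved worry about getting the base of Lemma~\ref{Productlemma} down to $A+1$; with the correct values that difficulty disappears.

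Concretely, the repaired computation is: the corner contributes $A+B+1$ unchanged; the first-row box at $(1,j+1)$, $1\leq j\leq A$, has $\lambda$-hook length $A+1-j$ and in $\mu$ gains exactly the height of the block column below it, so the first-row product is $\prod_{j\le\tau_1}(A+1-j+\tau_j^*)\prod_{j>\tau_1}(A+1-j)$, and Lemma~\ref{Productlemma} with $N=A+1$ and $c_j=\tau_j^*$ (positive for $j\le\tau_1$, with $\sum_j c_j=t$) bounds it by $\frac{A+1}{A+1-t}A!$; symmetrically the first-column product is at most $\frac{B+1}{B+1-t}B!$; the block contributes exactly $\Pi(\tau)$; and $\Pi(\lambda)=(A+B+1)A!B!$ assembles the claimed inequality. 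Your remark about the denominators $A+1-t$, $B+1-t$ is a fair but minor caveat (the bound is only meaningful when they are positive, which is how the lemma is used later); it is not the obstacle — the obstacle is the two hook-length errors above.
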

\begin{proof}
The hook product of $\mu$ equals \[\Pi(\mu)=\Pi(\tau)h(\lambda)_{(1,1)}\prod_{j=1}^{\tau_1}(h(\lambda)_{(1,j+1)}+\tau_j^{*})\prod_{j=\tau_1+1}^{A}h(\lambda)_{(1,j+1)}
\prod_{i=1}^{\tau_1^{*}}(h(\lambda)_{(i+1,1)}+\tau_i)\prod_{i=\tau_1^{*}+1}^{B}h(\lambda)_{(i+1,1)}.\]

Now note that $h(\lambda)_{(1,j+1)}=A+1-j$ and $h(\lambda)_{(i+1,1)}=B+1-i$. Using Lemma \ref{Productlemma} with $N=A+1$ and with the conjugate partition $\tau^{*}$, we get \[\prod_{j=1}^{\tau_1}(h(\lambda)_{(1,j+1)}+\tau_j^{*})\prod_{j=\tau_1+1}^{A}h(\lambda)_{(1,j+1)}\leq \frac{A+1}{A+1-t}A!.\]

Similarly using Lemma \ref{Productlemma} with $N=B+1$ and with the partition $\tau$, we get \[\prod_{i=1}^{\tau_1^{*}}(h(\lambda)_{(i+1,1)}+\tau_i)\prod_{i=\tau_1^{*}+1}^{B}h(\lambda)_{(i+1,1)}\leq \frac{B+1}{B+1-t}B!.\]

Thus \[\Pi(\mu)\leq \Pi(\lambda)\Pi(\tau) \frac{A+1}{A+1-t}\frac{B+1}{B+1-t},\]
since $\Pi(\lambda)=(A+B+1)A!B!$.
\end{proof}

\begin{lem}\label{TwoPartitions2}Let $\lambda=(A+1,B)$ be a partition. Let $\tau=(\tau_1,\cdots,\tau_s)$ be a partition of $t$ with first part $\tau_1\leq B$. Let $\mu$ be the partition $(A+1,B,\tau_1,\cdots,\tau_s)$. Then
\[\Pi(\mu)\leq \Pi(\lambda)\Pi(\tau) \frac{A+3}{A+3-t}\frac{B+1}{B+1-t}.\]
\end{lem}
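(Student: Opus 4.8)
The plan is to compute $\Pi(\mu)$ exactly in terms of $\Pi(\lambda)$, $\Pi(\tau)$ and the conjugate partition $\tau^{*}$, and then to control the resulting ratio with Lemma~\ref{Productlemma}, in exactly the same spirit as the proof of Lemma~\ref{TwoPartitions1}.

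First I would split the boxes of the Young diagram of $\mu=(A+1,B,\tau_1,\dots,\tau_s)$ into three groups: the boxes in row $1$, the boxes in row $2$, and the boxes in rows $\geq 3$, which form a translated copy of the diagram of $\tau$. Because $\tau_1\leq B\leq A+1$, any box $(i,j)$ with $i\geq 3$ has the same arm and the same leg inside $\mu$ as the corresponding box of $\tau$ has inside $\tau$, so this block contributes exactly $\Pi(\tau)$ to the hook product. For a box $(1,j)$ the arm has length $A+1-j$, while the leg has length $1+\tau_j^{*}$ when $j\leq B$ (here one uses $\tau_1\leq B$, so that $\tau_j^{*}>0$ forces $j\leq B$) and length $0$ when $j>B$; for a box $(2,j)$ with $j\leq B$ the arm has length $B-j$ and the leg has length $\tau_j^{*}$. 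Collecting these hook lengths gives
\[\Pi(\mu)=\Pi(\tau)\Big(\prod_{j=1}^{B}(A+3-j+\tau_j^{*})\Big)\Big(\prod_{j=B+1}^{A+1}(A+2-j)\Big)\Big(\prod_{j=1}^{B}(B+1-j+\tau_j^{*})\Big),\]
and the same bookkeeping applied to $\lambda=(A+1,B)$, where all the $\tau_j^{*}$ are zero, yields the analogous product without the $\tau_j^{*}$-shifts.

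Dividing the two expressions, the middle factor $\prod_{j=B+1}^{A+1}(A+2-j)$ cancels, and since $\tau_j^{*}=0$ for $j>\tau_1$ only the first $\tau_1$ terms of each remaining product differ, so
\[\frac{\Pi(\mu)}{\Pi(\lambda)\Pi(\tau)}=\frac{\prod_{j=1}^{\tau_1}(A+3-j+\tau_j^{*})}{\prod_{j=1}^{\tau_1}(A+3-j)}\cdot\frac{\prod_{j=1}^{\tau_1}(B+1-j+\tau_j^{*})}{\prod_{j=1}^{\tau_1}(B+1-j)}.\]
Now $(\tau_1^{*},\dots,\tau_{\tau_1}^{*})$ is a tuple of positive integers with $\sum_j\tau_j^{*}=t$, so Lemma~\ref{Productlemma} with $N=A+3$ bounds the first quotient by $\frac{A+3}{A+3-t}$, and with $N=B+1$ it bounds the second by $\frac{B+1}{B+1-t}$ (we may assume both denominators positive, else there is nothing to prove). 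Multiplying the two bounds gives the claim.

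The computation is routine; the only step needing genuine care is the case analysis of the hook lengths in rows $1$ and $2$ — in particular, verifying that the hypothesis $\tau_1\leq B$ is exactly what makes the leg lengths come out as $1+\tau_j^{*}$ and $\tau_j^{*}$ respectively and keeps $\mu$ a bona fide partition. After that, the estimate is a direct application of Lemma~\ref{Productlemma}, parallel to Lemma~\ref{TwoPartitions1}.
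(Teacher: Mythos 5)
Your proof is correct and follows essentially the same route as the paper: compute the hook lengths of $\mu$ in rows one and two as shifts of those of $\lambda$ by $\tau_j^{*}$ (with rows $\geq 3$ contributing exactly $\Pi(\tau)$), then apply Lemma~\ref{Productlemma} twice, with $N=A+3$ and $N=B+1$ and the tuple $\tau^{*}$. The only cosmetic difference is that you cancel against the analogous exact expression for $\Pi(\lambda)$, while the paper bounds each row product by factorials and uses $\Pi(\lambda)=\frac{(A+2)!\,B!}{A+2-B}$.
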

\begin{proof}
The hook product of $\mu$ equals \[\Pi(\mu)=\Pi(\tau)\prod_{j=1}^{\tau_1}(h(\lambda)_{(1,j)}+\tau_j^{*})\prod_{j=\tau_1+1}^{A+1}h(\lambda)_{(1,j)}
\prod_{i=1}^{\tau_1}(h(\lambda)_{(2,i)}+\tau_i^{*})\prod_{i=\tau_1+1}^{B}h(\lambda)_{(2,i)}.\]

Now note that $h(\lambda)_{(1,j)}=A+3-j$ for $j\leq B$ and $h(\lambda)_{(1,j)}=A+2-j$ for $j> B\geq \tau_1$. As before we use Lemma \ref{Productlemma} with $N=A+3$ and with the conjugate partition $\tau^{*}$. We get 
\[\prod_{j=1}^{\tau_1}(h(\lambda)_{(1,j)}+\tau_j^{*})\prod_{j=\tau_1+1}^{A+1}h(\lambda)_{(1,j)}
\leq\frac{A+3}{A+3-t}\frac{(A+2)!}{A+2-B}.\]

Now note that $h(\lambda)_{(2,i)}=B+1-i$. Again we use Lemma \ref{Productlemma}, with $N=B+1$ and the same partition $\tau^{*}$. We get
\[\prod_{i=1}^{\tau_1}(h(\lambda)_{(2,i)}+\tau_i^{*})\prod_{i=\tau_1+1}^{B}h(\lambda)_{(2,i)} \leq \frac{B+1}{B+1-t}B!.\]

Thus \[\Pi(\mu)\leq \Pi(\lambda)\Pi(\tau) \frac{A+3}{A+3-t}\frac{B+1}{B+1-t},\]
since $\Pi(\lambda)=\frac{(A+2)!B!}{A+2-B}$.
\end{proof}

The following lemma contains a computation that will be used several times in the proof of Theorem \ref{Theorem1.1}. Neither the assumption nor the conclusion is sharp, but it suffices for our purposes.
\begin{lem}\label{ComputationCase2and3}Let $k,r,\eps$ be three integers satisfying \[k\geq 337,\quad r\leq \frac{3k}{20}+1,\quad 0\leq \eps\leq 2r+1,\]
and let $\eta$ be zero or one.
Assume that \[X=\frac{2e^2}{\pi} e^{\frac{k+r}{k-r} \frac{r^2}{k}} \eps! \left(\frac{1}{k+r-\eps}\right)^{\eps-\eta}\geq \frac{1}{2}.\]
Then we can conclude that either \[r-2\eps \geq \sqrt{k}/2,\textup{ and }\,5\eps\leq r-2,\]
or we have that $\eps\in\{0,\eta\}$.
\end{lem}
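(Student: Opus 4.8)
The plan is to treat $X$ as a product of competing factors and show that if $\eps$ is not one of the "trivial" values $0$ or $\eta$, then the hypothesis $X \geq 1/2$ forces $\eps$ to be small relative to $r$ and $\sqrt{k}$, which then yields the two stated inequalities. I would first record the easy bounds coming from the constraints: since $r \leq \frac{3k}{20}+1$, the exponent $\frac{k+r}{k-r}\frac{r^2}{k}$ is bounded by an absolute constant times $\frac{r^2}{k}$ (roughly $\frac{23}{17}\cdot\frac{r^2}{k}$ after using $k \geq 337$ to absorb the $+1$), so $e^{\frac{k+r}{k-r}\frac{r^2}{k}}$ is at most something like $e^{c r^2/k}$ with $c$ close to $3/2$. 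The factor $\eps!$ grows, but it is multiplied by $(k+r-\eps)^{-(\eps-\eta)}$, and since $\eps \leq 2r+1 \leq \frac{3k}{10}+3 \ll k$, we have $k+r-\eps \geq k/2$ say, so this product is at most $\eps!\,(k/2)^{-\eps+\eta}$. The point is that $\eps! / (k/2)^\eps$ decays very rapidly once $\eps$ is even moderately large compared to $k$; combined with the constant $\frac{2e^2}{\pi}$ and the $e^{cr^2/k}$ factor (which, since $r \leq \frac{3k}{20}+1$, is itself at most $e^{c\cdot 3r/20}$ or so, i.e. at most exponential in $r$ with small rate), the only way to keep $X \geq 1/2$ is to have $\eps$ bounded in terms of $r$ and $k$.

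Concretely, I would split into cases on the size of $\eps$. If $\eps \geq 1$ and $\eps \neq \eta$, then $\eps - \eta \geq 1$ when $\eta = 0$, and when $\eta = 1$ we have $\eps \geq 2$ so $\eps - \eta \geq 1$ as well; thus the denominator always contributes at least one factor of $k+r-\eps \geq k/2$. Write $X \leq \frac{2e^2}{\pi}\, e^{c r^2/k}\, \eps!\, (k/2)^{-(\eps-\eta)}$. Using $\eps \leq 2r+1$, bound $e^{cr^2/k}$ by $e^{c(2r+1)r/k} \leq e^{c' r}$ for a modest $c'$ (here one uses $r \leq \frac{3k}{20}+1$ to turn $r^2/k$ into a linear-in-$r$ bound with a small coefficient — roughly $r/6$). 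Then the inequality $X \geq 1/2$ becomes, after taking logs, something like $\eps \log \eps - (\eps - \eta)\log(k/2) + c' r + O(1) \geq \log(1/2)$. Since $\eps \leq 2r+1$ and $r \leq \frac{3k}{20}+1$, we have $\eps \leq \frac{3k}{10}+3$, so $\log \eps \leq \log(k/2) - \log(5/3) + o(1)$, giving $\eps\log\eps - \eps\log(k/2) \leq -\eps\log(5/3) + o(\eps)$; this is negative and grows linearly in $\eps$, and it must be beaten by the $c'r + O(1)$ term. This forces a linear bound $\eps \leq \alpha r + \beta$ with $\alpha < 1/5$ and a small additive constant, from which $5\eps \leq r - 2$ follows (after checking the small-$r$ edge cases, which is where $k \geq 337$ and $r \leq \frac{3k}{20}+1$ get used to control $r \geq \frac12\sqrt{k} \geq \frac{\sqrt{337}}{2} > 9$).

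Having established $5\eps \leq r - 2$, the first claimed inequality $r - 2\eps \geq \sqrt{k}/2$ should then follow from the same log-inequality but read in the other direction: $r - 2\eps \geq r - \frac{2}{5}(r-2) = \frac{3r}{5} + \frac{4}{5}$, so it suffices to show $\frac{3r}{5} \geq \sqrt{k}/2$ in the relevant regime — but $r$ itself is not assumed bounded below, so instead I expect one must revisit the bound: when $r$ is small (close to its lower range), $\eps$ is forced to be $0$ (or $\eta$) outright because even $\eps = 1$ makes $X < 1/2$, since then $X \leq \frac{2e^2}{\pi} e^{cr^2/k} (k/2)^{-1+\eta}$ and with $\eta = 0$ this is $\ll 1$ for $k \geq 337$; so the dichotomy is genuinely a dichotomy. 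When $r$ is large enough that $\eps \geq 1$ is possible, one has $r \gtrsim \sqrt{k}$ automatically from $X \geq 1/2$ (the $e^{cr^2/k}$ must be large enough to compensate the $(k/2)^{-1}$, forcing $r^2/k \gtrsim \log k$, hence $r \gtrsim \sqrt{k\log k} \geq \sqrt{k}$), and then $r - 2\eps \geq \frac{3r}{5} \geq \sqrt{k}/2$. The main obstacle is bookkeeping the constants carefully enough across the three sub-regimes — small $r$ (where $\eps \in \{0,\eta\}$ is forced), moderate $r$, and $r$ near $\frac{3k}{20}$ — so that the coefficient in $5\eps \leq r-2$ and the threshold $\sqrt{k}/2$ both come out with the stated (non-sharp) form; the paper's own remark that "neither the assumption nor the conclusion is sharp" suggests generous slack is available, so I would aim for crude but clean estimates rather than optimal ones, likely using $\log x \leq x/e$ and Stirling in the form $\eps! \leq e\,\eps^{\eps+1/2}e^{-\eps}$ already quoted in Lemma~\ref{Factoriallemma}.
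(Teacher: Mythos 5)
Your overall strategy is the same as the paper's: upper-bound $X$, show that $X\geq 1/2$ forces $\eps$ to be small, show separately that it forces $r\gtrsim\sqrt{k}$ (because for small $r$ the factor $e^{\frac{k+r}{k-r}\frac{r^2}{k}}$ is $O(1)$ and the term $\eps!/(k+r-\eps)^{\eps-\eta}$, which is decreasing in $\eps$, is $O(1/k)$ already at $\eps=\eta+1$), and then combine. The lower bound on $r$ is handled essentially as in the paper. The genuine gap is in the key quantitative step, the upper bound on $\eps$. First, the log-inequality you display omits the $-\eps$ coming from Stirling's $e^{-\eps}$: with only the decay $-\eps\log(5/3)\approx -0.51\,\eps$ against $c'r$ with $c'\approx \frac{23}{17}\cdot\frac{3}{20}\approx 0.21$ (your ``$r/6$'' is already too optimistic), the linear coefficient comes out near $0.4$, not below $1/5$, so even the asymptotic claim $5\eps\leq r-2$ fails as written. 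Second, and more seriously, even after restoring the $-\eps$ term your bound has the shape $\eps\leq \alpha r+O(\log k)$, where the $O(\log k)$ comes from the factor $(k+r-\eps)^{\eta}$ (i.e.\ $\eta\log k$) together with $\tfrac12\log\eps$ and the constants. Since the only lower bound available on $r$ is of order $\sqrt{k}$ (about $26$ at $k=337$, and capped above by $\frac{3k}{20}+1\approx 51$ there), the term $5\cdot O(\log k)$ overwhelms the slack $(1-5\alpha)r$ for all $k$ in roughly the range $337\leq k\lesssim$ a few thousand. So your argument would prove the lemma only for $k$ beyond some larger, unspecified threshold, which does not mesh with the computer verification that stops at $k=337$.

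The working assumption that ``generous slack is available'' is exactly what fails here: in the paper's own proof the final estimate reads $5\eps\leq r-2+0.56$ at the worst point $k=337$, and even that needs the integrality of $5\eps$ and $r$ to conclude $5\eps\leq r-2$; likewise the contradiction for small $r$ gives $X\leq 0.44$ against the threshold $0.5$. To make the constants close on the whole range $k\geq 337$, the paper does not settle for a linear-in-$r$ bound with logarithmic error: it first uses $e\eps\leq k+r-\eps$ to get $\eps\leq \frac{7}{10}\frac{r^2}{k}+k^{1/3}$, then bootstraps (now knowing $\eps=O(k)$ with a small constant, so $e^{3}\eps\leq k+r-\eps$) to the sharper bound $\eps\leq\frac{7}{20}\frac{r^2}{k}+\frac{1}{2}k^{1/3}$, which is quadratic in $r$ with only a $k^{1/3}$ additive term; combined with $r\geq\frac{7}{5}\sqrt{k}$ this just barely yields both $r-2\eps\geq\sqrt{k}/2$ and $5\eps\leq r-2$. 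Your proposal is missing this bootstrap (or some equally precise substitute), and without it the stated conclusion is not reached for the small and moderate values of $k$ that the lemma must cover.
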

\begin{proof}
First, note that \[\frac{k+r}{k-r}\leq \frac{\frac{23}{20}k+1}{\frac{17}{20}k-1}\leq \frac{\frac{23}{20}337+1}{\frac{17}{20}337-1}=1.361... \leq 7/5.\]

We assume that $\eps\notin\{0,\eta\}$ and we will show that $\eps$ is quite small, and $r$ quite large.
We use that $\eps!\leq e\sqrt{\eps}(\eps/e)^\eps$ for $\eps\geq 1$, and that \begin{equation}\label{eqn:eq1} e\eps \leq 6e\frac{k}{20} +3e \leq \frac{17}{20}k-2\leq k-r-1\leq k+r-\eps,\quad (k\geq337) 
\end{equation}
to bound \begin{equation}\label{eqn:eq2} X\leq \frac{2e^3}{\pi}e^{\frac{7r^2}{5k}}\sqrt{\eps}\left(\frac{1}{e^2}\right)^\eps(k+r-\eps)^\eta.
\end{equation}

We can now prove that $\eps\leq \frac{7}{10}\frac{r^2}{k} + k^{1/3}$. Assume for the sake of contradiction that $\eps$ were bigger. Then we could bound \[X\leq \frac{2e^3}{\pi}\frac{\sqrt{(6\frac{k}{20}+3)}(k+\frac{3k}{20}+1)^\eta}{e^{2k^{1/3}}}<\frac{1}{2},\quad(k\geq 337)\]
a contradiction. Thus we have that \begin{equation}\label{eqn:eq6}\eps\leq \frac{7}{10}\frac{r^2}{k} + k^{1/3}.
\end{equation} 

Now that we know that $\eps$ is fairly small, we can use this to show that $\eps$ is even smaller. Note that \[\frac{7}{10}\frac{r^2}{k} + k^{1/3} \leq \frac{7}{10}(\frac{3}{20}+\frac{1}{337})(\frac{3k}{20}+1) + \frac{k}{337^{2/3}}\leq 0.108+0.037k.\]

In place of \eqref{eqn:eq1}, we have 
\begin{equation}\label{eqn:eq3} e^3\eps \leq e^3(0.108+0.037k) \leq \frac{17}{20}k-2\leq k-r-1\leq k+r-\eps,\quad (k\geq337)
\end{equation}
so that in place of \eqref{eqn:eq2} we even have \begin{equation}\label{eqn:eq4} X\leq \frac{2e^3}{\pi}e^{\frac{7r^2}{5k}}\sqrt{\eps}\left(\frac{1}{e^4}\right)^\eps(k+r-\eps)^\eta.
\end{equation}

As we concluded our bound \eqref{eqn:eq6} from \eqref{eqn:eq2}, we can now conclude from \eqref{eqn:eq4} that \begin{equation}\label{eqn:eq7}\eps\leq \frac{7}{20}\frac{r^2}{k} + \frac{k^{1/3}}{2}.
\end{equation} 

We now claim that $r\geq \frac{7}{5}\sqrt{k}$. Assume for the sake of contradiction that $r$ were smaller. Then, using the definition of $X$, we could bound \begin{equation}\label{eqn:eq5}X\leq \frac{2e^2}{\pi}e^{(7/5)^3}\frac{\eps!}{(k+r-\eps)^\eps}(k+r-\eps)^\eta.
\end{equation}

Since $\frac{\eps!}{(k+r-\eps)^\eps}$ is a decreasing function of $\eps$ in our range of $\eps$ (e.g. in the range $\eps\leq k/3$), we may bound \eqref{eqn:eq5} by putting $\eps=\eta+1$ and obtaining \[X\leq \frac{2e^2}{\pi}\frac{e^{(7/5)^3}2}{k-\eps}<\frac{1}{2},\quad (k\geq337)\]
a contradiction. Hence $r\geq \frac{7}{5}\sqrt{k}$.
This, together with the bound \eqref{eqn:eq7} is enough to prove that the two desired inequalities hold.
\begin{align*}
r-2\eps \geq r(1-\frac{7r}{10k})-k^{1/3} &\geq r(1-\frac{21}{200}-\frac{7}{10k})-k^{1/3}\\
&\geq 0.89r-k^{1/3}\\
 &\geq 1.246\sqrt{k}-k^{1/3}\geq \frac{\sqrt{k}}{2},\quad (k\geq337)
\end{align*}
\begin{align*}
5\eps \leq \frac{7}{4}\frac{r^2}{k} + \frac{5}{2}k^{1/3} &\leq r(\frac{21}{80}+\frac{7}{4k})+\frac{5}{2}k^{1/3}\\
&\leq 0.227r+\frac{5}{2}k^{1/3} \\
&\leq r - 2 + (\frac{5}{2}k^{1/3} +2-0.733\frac{7}{5}\sqrt{k}) \leq r - 2 +0.56.\quad (k\geq337)
\end{align*}

Since $\eps$ and $r$ are integers, this implies $5\eps\leq r-2$.
\end{proof}

We can now give the proof of Theorem~\ref{Theorem1} for big enough $n$.
\begin{thm}\label{Theorem1.1} The set $\{\dim \rho\mid \rho \mbox{ irreducible representation of }S_n\}$ is not equal to the set $\{\dim \rho\mid \rho \mbox{ irreducible representation of }A_n\}$, for all $n \geq 677$
\end{thm}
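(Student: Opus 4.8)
The argument is constructive. Given $n\geq 677$, put $k=\lfloor (n-3)/2\rfloor$, so $k\geq 337$. For $n$ odd take $\lambda$ to be the self-conjugate hook $(k+2,1^{k+1})$, and for $n$ even take $\lambda=(k+2,2,1^{k})$, which is also self-conjugate; set $\eta=0$ in the first case and $\eta=1$ in the second (this $\eta$ records the size of the ``interior'' of $\lambda$ itself, i.e.\ of $\lambda$ with its first row and column removed). A direct computation gives a very smooth hook product, $\Pi(\lambda)=(2k+3)\big((k+1)!\big)^2$ for $n$ odd and $\Pi(\lambda)=(2k+3)(k+2)^2(k!)^2$ for $n$ even; in particular every prime factor of $\Pi(\lambda)$ is $\leq k+2$ or divides $2k+3$. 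Since $\lambda$ is self-conjugate, $\frac{n!}{2\Pi(\lambda)}$ is the dimension of an irreducible representation of $A_n$, while $\frac{n!}{\Pi(\lambda)}$ is the dimension of an irreducible representation of $S_n$ and, $\lambda$ being self-conjugate, is realized by no $A_n$-irreducible coming from $\lambda$ itself. By the dictionary between partitions and dimensions recalled in the introduction, both halves of the theorem follow once we prove the rigidity statement: \emph{if $\mu\vdash n$ satisfies $\Pi(\mu)=c\,\Pi(\lambda)$ with $c\in\{\tfrac12,1,2\}$, then $c=1$ and $\mu=\lambda$.} Indeed this excludes a non-self-conjugate $\mu$ with $\Pi(\mu)=\Pi(\lambda)$ and a self-conjugate $\nu$ with $2\Pi(\nu)=\Pi(\lambda)$, so $\frac{n!}{\Pi(\lambda)}$ is not an $A_n$-dimension; and it excludes any $\mu$ with $\Pi(\mu)=2\Pi(\lambda)$, so $\frac{n!}{2\Pi(\lambda)}$ is not an $S_n$-dimension.

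\textbf{Case analysis by the shape of $\mu$.} Replacing $\mu$ by $\mu^*$ if necessary (this does not change $\Pi(\mu)$) we may assume $\mu_1\geq\mu_1^*$. Write $A+1=\mu_1$, $B+1=\mu_1^*$ and let $\tau$ (of size $t$) be the interior of $\mu$; then $\mu$ is obtained either by inserting $\tau$ into the hook $(A+1,1^B)$, to which Lemma~\ref{TwoPartitions1} applies, or, when $B$ is small, by appending $\tau$ below the two-row partition $(A+1,B)$, to which Lemma~\ref{TwoPartitions2} applies. If $t$ is large, so that $\mu$ is genuinely ``fat'', then bounding $\Pi(\mu)$ from above via Lemmas~\ref{Productlemma} and \ref{Factoriallemma} gives $\Pi(\mu)<\tfrac12\Pi(\lambda)$, which already contradicts $\Pi(\mu)=c\,\Pi(\lambda)$. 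So we may assume $\mu$ is ``thin'', in one of the two shapes above. In either case the relevant two-partition lemma, together with $\Pi\big((A+1,1^B)\big)=(A+B+1)A!B!$ and Lemma~\ref{Factoriallemma} applied to compare $A!B!$ with $\big((k+1)!\big)^2$, bounds $\Pi(\mu)/\Pi(\lambda)$ from above by exactly the quantity $X$ of Lemma~\ref{ComputationCase2and3}, with $\eps$ playing the role of $t$ and $r$ measuring the deviation of $A$ (resp.\ of $\mu_1-\mu_2$) from its value for $\lambda$. Feeding the hypothesis $\Pi(\mu)=c\,\Pi(\lambda)\geq\tfrac12\Pi(\lambda)$ into Lemma~\ref{ComputationCase2and3} forces either $t\in\{0,\eta\}$ or the alternative $r-2t\geq\tfrac12\sqrt k$ and $5t\leq r-2$. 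In the first case $\mu$ has exactly the outer shape of $\lambda$, and then a short direct argument — the rigidity of $A!B!$ for fixed $A+B$ (minimized, strictly, at $A=B$), respectively the analogous rigidity of the two-row hook product — pins $\mu$ down to $\lambda$ and $c$ to $1$.

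\textbf{The prime-factor contradiction.} It remains to eliminate the boundary case $t\geq 1$, $r-2t\geq\tfrac12\sqrt k$, $5t\leq r-2$, where $\mu$ is close to but not equal to $\lambda$. Here the hook lengths of $\mu$ contain a block of consecutive integers $k'+1,\dots,k'+h$ with $k'$ comparable to $k$ and $h$ proportional to $r$, so that $\tfrac12\sqrt k\leq h\leq\tfrac{3k}{20}$; all of these divide $\Pi(\mu)$, and Lemma~\ref{Primelemma2} produces a prime $p\geq 3h$ dividing $\Pi(\mu)$. Because $\Pi(\lambda)$ is smooth in the precise sense above, comparing $p$-adic valuations rules out $\Pi(\mu)=c\,\Pi(\lambda)$; in the residual subcase where $p$ could still divide $\Pi(\lambda)$ one instead either uses the two primes just below $k+1$ furnished by Lemma~\ref{Primelemma1} to compare multiplicities of a prime of $\Pi(\lambda)$ in $\Pi(\mu)$, or uses the largeness of $\mu_1$ to locate a factorial $(\mu_1-O(1))!$ inside $\Pi(\mu)$ containing a prime strictly between $k+1$ and $\mu_1\leq n-1$, which cannot divide $(2k+3)\big((k+1)!\big)^2$. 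This contradiction completes the rigidity statement, hence the theorem for $n\geq 677$ (the range $3\leq n\leq 676$ being handled by computer, as announced in the introduction).

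\textbf{Main obstacle.} The crux is not any individual estimate but the control of the ``near-$\lambda$'' regime: one needs Lemma~\ref{ComputationCase2and3} to yield not merely that the deviation $t$ is small, but the sharper bound $5t\leq r-2$, so that the block of consecutive hook lengths read off from $\mu$ is simultaneously long enough ($h\gtrsim\sqrt k$) and short enough ($h\leq\tfrac{3k}{20}$) for Lemma~\ref{Primelemma2} to deliver a prime $p\geq 3h$ that the smooth number $c\,\Pi(\lambda)$ cannot absorb. Balancing this quantitative deviation against the existence of a large prime factor in a short interval is precisely what forces the threshold $n\geq 677$ (equivalently $k\geq 337$).
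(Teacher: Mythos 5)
Your overall frame agrees with the paper (same self-conjugate near-hook $\lambda$, same reduction to the rigidity statement ``no other partition has hook product $\tfrac12\Pi(\lambda)$, $\Pi(\lambda)$ or $2\Pi(\lambda)$'', same closing prime-valuation idea), but the step that carries essentially all the difficulty is missing. You pass from ``$\Pi(\mu)=c\,\Pi(\lambda)$'' to ``$\mu$ is thin, so Lemma~\ref{TwoPartitions1} or~\ref{TwoPartitions2} applies with small parameters'' by asserting that if the interior $t$ is large then the upper-bound lemmas give $\Pi(\mu)<\tfrac12\Pi(\lambda)$. As stated this is false: with your parametrization a two-row partition such as $(k+2,k+1)$ has interior of size about $k$, yet its hook product $\tfrac{(k+3)!(k+1)!}{2}$ exceeds $2\Pi(\lambda)$ by a factor of order $k$. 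More generally, candidates are excluded not only because their hook product is too small but also because it is too large or has the wrong prime factorization, and Lemmas~\ref{Productlemma}, \ref{TwoPartitions1}, \ref{TwoPartitions2}, \ref{Factoriallemma} only bound $\Pi(\mu)$ from above, so they cannot by themselves establish your dichotomy. The paper restricts the shape by a completely different mechanism, absent from your proposal: Lemma~\ref{Primelemma1} gives two primes $p,q\in[k-\lfloor k/20\rfloor,k]$ with $p^2q^2\mid\Pi(\lambda)$, hence $\mu$ must contain two boxes with hook length a multiple of $p$ (and $3p>n$); Lemma~\ref{factorialdivides} excludes having boxes of hook length $2p$ and $2q$ simultaneously; and a case analysis on the arm/leg lengths of the two $p$-hooks forces the near-hook or near-two-row configuration, which is what makes $\eps$ and $r$ small enough to feed into Lemma~\ref{ComputationCase2and3}. (Incidentally, in that lemma $\eta$ distinguishes these two configurations of the candidate $\mu$, not the parity of $n$ as in your write-up; both configurations occur for each parity, and with your assignment the two-row case for odd $n$ would be run with the wrong exponent.)

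There is a second gap in your treatment of the case $t\in\{0,\eta\}$. Lemma~\ref{ComputationCase2and3} gives no bound on $r$ in that branch, so $\mu$ need not have ``exactly the outer shape of $\lambda$'': it can be an unbalanced hook $(k+1+r,1^{k+1-r})$ with $r$ as large as about $3k/20$. Strict minimization of $A!B!$ at $A=B$ only says the hook product exceeds $\Pi(\lambda)$ for $r\geq1$; it does not rule out equality with $2\Pi(\lambda)$, and indeed the ratio $(k+1+r)!(k+1-r)!/((k+1)!)^2$ passes through the value $2$ near $r\approx\sqrt{k\log 2}$, so a priori some such hook could hit $2\Pi(\lambda)$ exactly. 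The paper closes this by a two-sided estimate (for $r\leq\tfrac12\sqrt k$ the product lies strictly between $\Pi_k$ and $2\Pi_k$) and, for larger $r$, by rerunning the prime-factor argument of Lemma~\ref{Primelemma2}; your ``short direct rigidity'' argument does neither. The final prime-valuation paragraph of your proposal is in the right spirit, but it inherits these gaps because the quantities $r-2\eps$ and $5\eps\leq r-2$ it relies on are only available after the shape restriction has been established.
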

\begin{proof}
For odd $n=2k+1$, we consider the partition 
\begin{align*}\lambda_k = (k+1 , 1^k),
\end{align*}
whereas for even $n=2k+2$, we consider the partition 
\begin{align*}\lambda^{'}_k = (k+1 , 2, 1^k).
\end{align*} 

Since the proof for odd and even $n$ is completely analogous, we only give the proof for odd $n$. 

The partition $\lambda_k$ has a symmetric Young diagram corresponding to an irreducible representation $\rho_k$ in $S_n$, and a pair of irreducible representations of $A_n$ which we'll denote by $(\rho_{k,1},\rho_{k,2})$. Then $\dim\rho_k=2\dim\rho_{k,i}$. The hook product of $\lambda_k$ is \begin{equation}\label{eq:hookproduct}\Pi_k=(2k+1)k!^2.\end{equation} To prove that there is no other irreducible representation of either $S_n$ or $A_n$ with the same dimension as $\rho_k$ or $\rho_{k,i}$, we show that there does not exist any other partition with a hook product equal to $\Pi_k$, $2\Pi_k$ or $\frac{1}{2}\Pi_k$, so that the result follows by the hook product formula.\\
We first reduce the possible shape of a partition with a such hook product by using Lemma~\ref{Primelemma1} and considering two primes $p,q$ in the interval $[k-\left\lfloor\frac{k}{20}\right\rfloor,k]$. Since $p^2q^2\mid\Pi_k$, the diagram contains two boxes of hook length a multiple of $p$ and two boxes of hook length a multiple of $q$. In the case of $n$ even, we have $\Pi(\lambda_k^{'})=\Pi_k^{'}=(2k+1)(k+1)^2(k-1)!^2$. In order to guarantee that again $p^2q^2\mid\Pi_k^{'}$, we must apply Lemma \ref{Primelemma1} to $k-1$, which is why the Theorem assumes $k\geq 338$. 

A partition of $2k+1$ cannot have a box of hook length $3p$ or higher since $3p>2k+1$. So unless there is a box of hook length $2p$, there are two boxes of hook length $p$. 

The first step is to show that there cannot both be a box of hook length $2p$ and $2q$. Assume for the sake of contradiction that there are boxes of hook length $2p$ and $2q$. Then there is a box not on position $(1,1)$ with hook length at least $2q$. By Lemma \ref{factorialdivides} we have that $(4q-2k-1)!$ divides the hook product of the partition. However, since certainly \[4q-2k-1\geq 2k-4\left\lfloor\frac{k}{20}\right\rfloor -1>\frac{3}{2}k,\] Lemma~\ref{Primelemma1} implies that there is a prime in the interval $[k+1, 4q-2k+1]$ which divides the hook product but does not divide $\Pi_k$, $2\Pi_k$ or $\frac{1}{2}\Pi_k$. 

\begin{figure}[h]
\centering
\includegraphics[width=6cm]{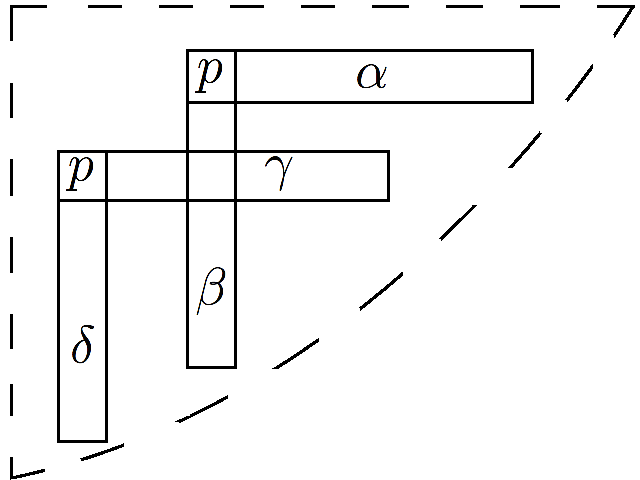}
\caption{Shape of hypothetical partition with hook product $\Pi_k$}
\end{figure}

So, possibly switching $q$ for $p$, we may assume the diagram to have the shape indicated in Figure 1. We denote the arms of the boxes by $\alpha$ and $\gamma$, and the legs of the boxes by $\beta$ and $\delta$ We denote the number of boxes in $\alpha, \beta, \gamma, \delta$ by $a,b,c,d$. Thus \[\begin{cases}a+b = p-1\\c+d = p-1\end{cases}.\] 

Now note that the two boxes of hook length $p$ together with their arms and legs already cover $2p$ or $2p-1$ of the $2k+1$ boxes, so there are at most $2k+1 - (2p-1)\leq 2\left\lfloor\frac{k}{20}\right\rfloor+2$ boxes elsewhere. One important consequence is that if it holds that $c \geq 2\left\lfloor\frac{k}{20}\right\rfloor+2$, we can conclude that $\gamma$ lies on the second row; a row above $\alpha$ or between $\alpha$ and $\gamma$ would imply there to be more than $2p+c>2k+1$ boxes. Similarly, if $b\geq 2\left\lfloor\frac{k}{20}\right\rfloor+2$ then $\beta$ is on the second column. 

We now distinguish three cases of different qualitative behaviour.

\noindent \textbf{ Case one : $a,d \leq k-3\left\lfloor\frac{k}{20}\right\rfloor-3$} 

\begin{figure}[h]
\centering
\includegraphics[width=6cm]{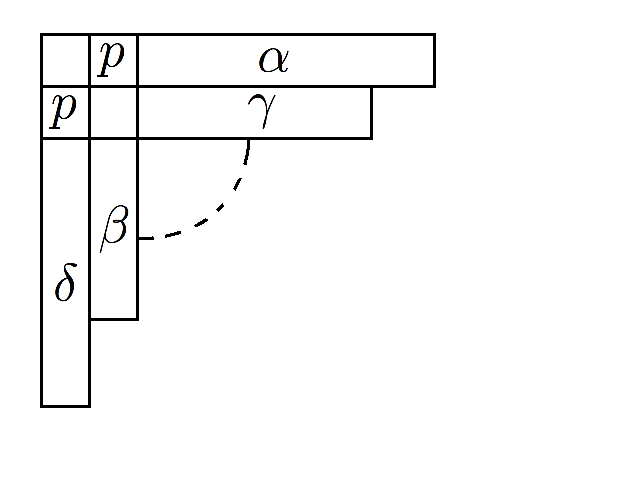}
\caption{Shape of partition in Case 1}
\end{figure}

\mdseries In this case, \[c=p-1-d\geq p-1-k+ 3\left\lfloor\frac{k}{20}\right\rfloor+3\geq  2\left\lfloor\frac{k}{20}\right\rfloor+2,\] which implies that $\gamma$ lies on the second row. Similarly, $\beta$ lies on the second column. This fixes the position of the two boxes of hook length $p$ to be $(1,2)$ and $(2,1)$. Then we can assume that Case 1 does not occur; consider the prime $q$. There cannot be a box of hook length $2q$ since this would be the upper left box and we would have \[2q=a+d+3\leq 2k-6\left\lfloor\frac{k}{20}\right\rfloor-3< 2k-4\left\lfloor\frac{k}{20}\right\rfloor\leq 2q,\] which is impossible. After switching $q$ for $p$, we are no longer in Case 1 since $(1,2)$ and $(2,1)$ do not have hook length $q$. 

\noindent \textbf{Case two : $a,d>k-3\left\lfloor\frac{k}{20}\right\rfloor-3$} 

\begin{figure}[h]
\centering
\includegraphics[width=6cm]{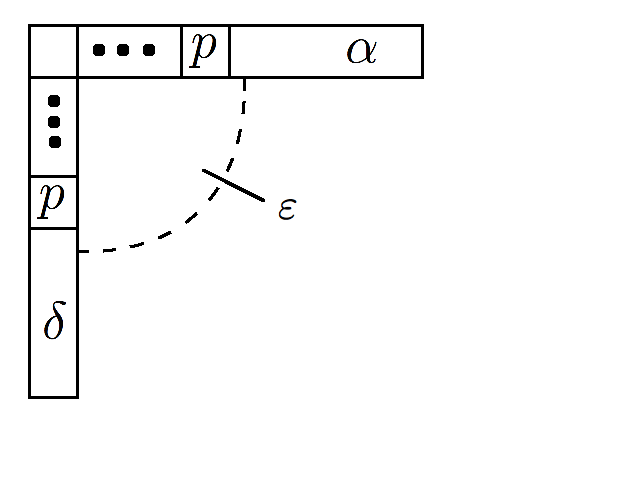}
\caption{Shape of partition in Case 2}
\end{figure}

\mdseries This implies that $\alpha$ is on the first row and $\delta$ is on the first column. Denote by $\epsilon$ the number of boxes not on the first row or column. Let $r$ be such that the first row consists of $k+r-\eps+1$ boxes. It follows that the first column consists of $k-r+1$ boxes. Since $k-r+1\geq 2+d\geq k-3\lfloor\frac{k}{20}\rfloor$, we have that \[r\leq 3\left\lfloor\frac{k}{20}\right\rfloor+1.\]

Since $k+r-\eps+1 \geq 2+a \geq  k-3\lfloor\frac{k}{20}\rfloor$, we also have that \[\epsilon\leq 6\left\lfloor\frac{k}{20}\right\rfloor+2.\] 


We may use Lemma \ref{TwoPartitions1} to bound the hook product of the partition
\begin{align*}\Pi &\leq (2k+1-\eps)(k+r-\epsilon)!(k-r)!\epsilon! \frac{k+r-\eps+1}{k+r-2\eps+1} \frac{k-r+1}{k-r-\eps+1}\\
& \leq 4(2k+1)(k+r)!(k-r)!\frac{1}{(k-r-\eps)^\eps}\epsilon!,
\end{align*}
since the two last factors are easily seen to be bounded by 2 by using the upper bounds on $\eps$ and $r$. We use Lemma~\ref{Factoriallemma} to further bound the hook product

\begin{equation*}\Pi \leq \Pi_k \frac{2e^2}{\pi}e^{\frac{k+r}{k-r}\frac{r^2}{k}}\left(\frac{1}{k+r-\epsilon}\right)^\epsilon \eps!.
\end{equation*}

Since by assumption $\Pi$ is either $\frac{1}{2}\Pi_k$, $\Pi_k$ or $2\Pi_k$, we deduce that \begin{equation*}\frac{1}{2}\leq \frac{2e^2}{\pi}e^{\frac{k+r}{k-r}\frac{r^2}{k}}\left(\frac{1}{k+r-\epsilon}\right)^\epsilon \eps!.
\end{equation*}

This means that our situation satisfies the conditions of Lemma \ref{ComputationCase2and3}, with $\eta=0$, and we may conclude that either $\eps=0$ or $r-2\eps\geq \sqrt{k}/2$, and $5\eps\leq r-2$. We deal with the case $\eps\neq 0$ first.

We now consider the prime factors of the hook product. We see that the hook product contains as factors $(k+r-2\epsilon)!$, and $(k-r-\epsilon)!$. 
Since $r-2\epsilon\leq r-2\leq \frac{3}{20}k$, and 
\[r-2\epsilon \geq \frac{1}{2}\sqrt{k},\] we can consider, according to Lemma~\ref{Primelemma2} a prime $p'>3(r-2\epsilon)$ such that $p' \mid \frac{(k+r-2\epsilon)!}{k!}$. Now since 
$ 5\epsilon \leq r-2$, we deduce that $p'>2r-\epsilon$ and so $p' \nmid \frac{k!}{(k-r-\epsilon)!}$, and arrive at a contradiction; there are more factors of $p'$ in $\Pi$ than there are in $\Pi_k$. We conclude that no partition satisfying the conditions of Case 2 with $\epsilon\neq 0$ has a hook product equal to $\frac{1}{2}\Pi_k, \Pi_k$, or $2\Pi_k$.

Finally, we address the case $\epsilon=0$. The hook product $(2k+1)(k+r)!(k-r)!$ is then strictly bigger than $\Pi_k$, and if $r\leq\frac{1}{2}\sqrt{k}$ were to hold, smaller than \[(2k+1)k!^2\frac{e^2}{2\pi}e^{\frac{7r^2}{5k}}<\frac{e^2}{2\pi}e^{7/20}\Pi_k<2\Pi_k.\]

Thus we have $r-2\epsilon\geq\frac{1}{2}\sqrt{k}$ and we may finish as above.\\

\noindent \textbf{Case three : $a>k-3\left\lfloor\frac{k}{20}\right\rfloor-3\geq d$.}

\begin{figure}[h]
\centering
\includegraphics[width=6cm]{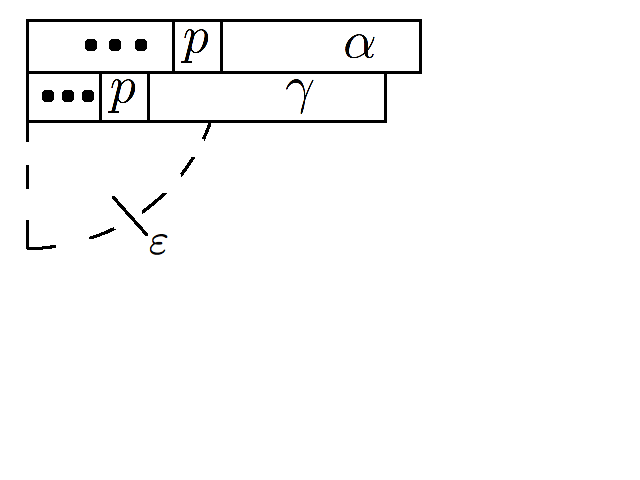}
\caption{Shape of partition in Case 3}
\end{figure}

\mdseries As in Case 1, we have that $c\geq 2\left\lfloor\frac{k}{20}\right\rfloor +2$, and so we immediately deduce that $\gamma$ lies on the second row, so that the boxes of hook length $p$ lie on the first and second row. Now consider the prime $q$. There cannot be a box of hook length $2q$ because its arm cannot cover both $\alpha$ and $\gamma$ and the total number of boxes would be at least \[2q+\min(a,c) \geq 2k+2,\] more than the total number of available boxes. Thus there are two boxes of hook length $q$. We now claim that these also lie on the first and second row. If not, since a box of hook length $q$ not on the first two rows can only lie in one of $\beta$ and $\delta$, the number of boxes would be at least \[q+\min(b,d)+a+c+1= q+p+\min(a,c)\geq 2k+2.\] 

Thus, there are boxes of hook length $p$ and $q$ on both the first and second row. This forces $\delta$ to be very short. Assume that $p$ is the largest of the two primes. Then $\delta$ is disjoint from the two hooks of the boxes of hook length $q$, so $d+2+2q\leq 2k+1$, so 

\[d\leq 2k-1-2q\leq2\left\lfloor\frac{k}{20}\right\rfloor-1.\]

If $q$ is the largest of the two primes, then $d+2+p+q\leq 2k+1$ and we also deduce that $d\leq 2\left\lfloor\frac{k}{20}\right\rfloor-1$.

Denote by $\epsilon$ the number of boxes not on the first or second row. Let $r$ be such that the first row consists of $k+r-\eps+1$ boxes. It follows that the second row consists of $k-r$ boxes, and so $\epsilon\leq 2r+1$. Since $k-r\geq 1+c= p-d\geq k-3\lfloor\frac{k}{20}\rfloor+1$, we have that \[r\leq 3\left\lfloor\frac{k}{20}\right\rfloor-1\leq \frac{3}{20}k, \quad\textup{ and } \quad \epsilon\leq 6\left\lfloor\frac{k}{20}\right\rfloor-1.\] 

We use Lemma \ref{TwoPartitions2} to bound the hook product of this partition 

\begin{align*}\Pi &\leq \frac{(k+r-\eps+2)!(k-r)!}{2r-\eps+1}\eps!\frac{k+r-\eps+3}{k+r-2\eps+3}\frac{k-r+1}{k-r+1-\eps}\\
&\leq 4\frac{(k+r)!(k-r)!}{(2r-\eps+1)(k+r-\eps+2)^{\eps-2}}!\eps!,
\end{align*}
since the two last factors are easily seen to be bounded by 2 by using the upper bounds on $\eps$ and $r$. We use Lemma~\ref{Factoriallemma} to further bound the hook product

\begin{align*}\Pi &\leq (2k+1)k!^2\frac{2e^2}{\pi}e^{\frac{k+r}{k-r}\frac{r^2}{k}}\left(\frac{1}{k+r-\epsilon}\right)^{\epsilon-1} \eps!.
\end{align*}

Since by assumption $\Pi$ is either $\frac{1}{2}\Pi_k$, $\Pi_k$ or $2\Pi_k$, we deduce that \begin{equation*}\frac{1}{2}\leq \frac{2e^2}{\pi}e^{\frac{k+r}{k-r}\frac{r^2}{k}}\left(\frac{1}{k+r-\epsilon}\right)^{\epsilon-1} \eps!.
\end{equation*}

This means that our situation satisfies the conditions of Lemma \ref{ComputationCase2and3}, with $\eta=1$ and we may conclude that either $\eps=0,1$ or $r-2\eps\geq \sqrt{k}/2$, and $5\eps\leq r-2$. We deal with the case $\eps\neq 0$ first. We now consider the prime factors of the hook product. We see that the hook product contains as factors $\frac{(k+r-2\epsilon)!}{2r-\epsilon+2}$, and $(k-r-\epsilon)!$. 
Since $r-2\epsilon\leq r\leq \frac{3}{20}k$, and 

\[r-2\epsilon\geq \frac{1}{2}\sqrt{k},\] we can consider, according to Lemma~\ref{Primelemma2} a prime $p'>3(r-2\epsilon)$ such that $p' \mid \frac{(k+r-2\epsilon)!}{k!}$. Now since 
\begin{align*}
5\epsilon \leq r-2,
\end{align*}
we deduce that $p'>2r-\epsilon+2$ and so $p' \nmid \frac{k!}{(k-r-\epsilon)!}$, and arrive at a contradiction; there are more factors of $p'$ in $\Pi$ then there are in $\Pi_k$. We conclude that no partition satisfying the conditions of Case 3 with $\epsilon\neq 0,1$ has a hook product equal to $\frac{1}{2}\Pi_k, \Pi_k$, or $2\Pi_k$.

Finally, the cases $\epsilon=0,1$. If $\eps=0$, then the hook product of the partition is too big; \[\frac{(k+r+2)!(k-r)!}{2r+2} = \frac{k+r+2}{2r+2}(k+r+1)(k+r)!(k-r)!>2\Pi_k.\]

If $\eps=1$, then the hook product equals \[\frac{(k+r+2)!(k-r+1)!}{(2r+1)(k+r+1)(k-r)}=\frac{k+r+2}{2r+1}\frac{k-r+1}{k-r}(k-r)!(k+r)!.\]

Since for $r=0$, the hook product does not equal $\frac{1}{2}\Pi_k, \Pi_k$, or $2\Pi_k$, the hook product is smaller than $\frac{k}{2}(k-r)!(k+r)!$. It would be strictly smaller than $\frac{1}{2}\Pi_k$ unless $(k-r)!(k+r)!\geq 2k!^2$, thus, using Lemma \ref{Factoriallemma} \[\frac{e^2}{2\pi}e^{\frac{2r^2}{k}}\geq 2.\]

This immediately implies that we have $r\geq \sqrt{k}/2$, and we can proceed as in the case $\epsilon\neq 0$.

The fourth case, $d>k-3\left\lfloor\frac{k}{20}\right\rfloor-3\geq a$, corresponds to the partitions conjugate to those that we considered in case 3. Since conjugate partitions have the same hook product, no partition in this case has hook product equal to $\frac{1}{2}\Pi_k, \Pi_k$, or $2\Pi_k$, which finishes the proof.
\end{proof}

\section{Computer check}
In this section we describe the computer calculations we performed to check the remainder range of $n$ in Theorem~\ref{Theorem1}. We start with the odd case where $n=2k+1$, and we postpone the discussion regarding even $n$  to the end of this section. We have used the computer algebra program SAGE, because of its great functionality for integer partitions. The SAGE worksheet we used is available on the author's website. We will say that a partition $\mu=(\mu_1,\cdots,\mu_s)$ is contained in a partition $\nu=(\nu_1,\dots,\nu_r)$ if $s\leq r$ and $\mu_i\leq\nu_i$ for all $i=1,\cdots,s$. The following fact completes the proof of Theorem~\ref{Theorem1} for all odd $n\geq 5$. Recall that $\lambda_k =(k+1,1^k)$ and $\Pi_k=(2k+1)k!^2$. 
 
\begin{fact}\label{Fact1} The only partition of $2k+1$ with hook product equal to $\frac{1}{2}\Pi_k$, $\Pi_k$, or $2\Pi_k$, is $\lambda_k$, for all $2\leq k \leq 337$. 
\end{fact}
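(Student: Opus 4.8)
The statement is a finite check --- one hook-product identity for each $k$ with $2\le k\le 337$ --- but it cannot be settled by naively iterating over all partitions of $n=2k+1$: the number of partitions of $675$ exceeds $10^{25}$, and naive iteration becomes impractical well before the top of the range. The plan is instead to replay, for each individual $k$, the structural part of the proof of Theorem~\ref{Theorem1.1}, which cuts the set of candidate $\lambda$ down to a small explicit family that can be enumerated.

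Concretely, for each $k$ I would first check directly whether the interval $[\,k-\lfloor k/20\rfloor,\,k\,]$ contains two primes $p\ge q$ --- true for $k\ge 337$ by Lemma~\ref{Primelemma1}, and for most smaller $k$ as well. When it does, the reasoning in the proof of Theorem~\ref{Theorem1.1} applies verbatim to that $k$: from $p^2q^2\mid\Pi_k$, from the absence of boxes of hook length $\ge 3p$ in a partition of $2k+1$, and from Lemma~\ref{factorialdivides}, every $\lambda$ with $\Pi(\lambda)\in\{\tfrac12\Pi_k,\Pi_k,2\Pi_k\}$ must --- after possibly transposing and swapping $p$ with $q$ --- carry a box of hook length $p$ on its first row and a box of hook length $p$, or in a degenerate subcase $2p$, on its first column. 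These two hooks occupy at least $2p-1\ge 2k-2\lfloor k/20\rfloor-1$ boxes, so $\lambda$ is a one- or two-row ``fat hook'' $(A{+}1,1^B)$ or $(A{+}1,B)$ with at most $2\lfloor k/20\rfloor+2$ further boxes attached, in one of the finitely many configurations catalogued by Lemmas~\ref{TwoPartitions1} and~\ref{TwoPartitions2} and their transposes. For each such configuration one ranges over the $O(k)$ admissible fat-hook dimensions and the few thousand choices of the attached partition $\tau$ (none has more than $35$ boxes), computes $\Pi(\lambda)$ exactly in SAGE --- a quick floating-point size estimate discards almost every configuration before the exact computation --- and tests membership in $\{\tfrac12\Pi_k,\Pi_k,2\Pi_k\}$; the only survivor should be the empty $\tau$, i.e.\ $\lambda=\lambda_k$. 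One may instead sharpen the reduction along the lines of Lemma~\ref{ComputationCase2and3}, which forces $\tau$ to be almost empty and removes the $\tau$-enumeration altogether, after verifying that the non-sharp hypotheses of that lemma stay valid well below $k=337$. For the finitely many small $k$ for which $[\,k-\lfloor k/20\rfloor,\,k\,]$ fails to contain two primes, one falls back to a nearby prime pair with a wider gap (enlarging the budget of attached boxes accordingly), or, for genuinely small $n$, simply enumerates all partitions of $2k+1$ directly.

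The main obstacle is not the arithmetic but the completeness of this reduction uniformly over the range: the case split for where the $p$- and $q$-hooks sit, the degenerate possibility of a box of hook length $2p$, the role of transposition, and the positions at which $\tau$ can be attached to the fat hook all have to be handled exhaustively, and --- unlike in Theorem~\ref{Theorem1.1} --- without recourse to the asymptotic prime lemmas. In effect one must turn the combinatorial skeleton of the proof of Theorem~\ref{Theorem1.1} into a rigorous finite algorithm valid down to the smallest $k$ admitting two primes in the short interval, with unconditional enumeration of all partitions handling everything below that threshold. With that care in place the computation is routine, and it returns $\lambda=\lambda_k$ as the unique solution for every $2\le k\le 337$.
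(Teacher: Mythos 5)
Your overall strategy---use the divisibility of $\Pi_k$ by squares of large primes to pin the shape of a candidate partition, then finish with a targeted computer enumeration---is indeed the spirit of the paper's verification, but the specific algorithm you describe has a gap that destroys both its completeness and its claimed feasibility. Your budget of ``at most $2\lfloor k/20\rfloor+2$ further boxes attached'' counts the boxes lying \emph{outside} the two hooks of length $p$, but in the hook-plus-$\tau$ configuration of Lemma~\ref{TwoPartitions1} (and the two-row configuration of Lemma~\ref{TwoPartitions2}) the legs, respectively arms, of the $p$-boxes themselves lie inside the attached region, so the attached partition $\tau$ is only bounded by $\eps\le 6\lfloor k/20\rfloor+2$, as derived in Cases 2 and 3 of Theorem~\ref{Theorem1.1}---close to $100$ boxes when $k$ is near $337$, not $35$. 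An enumeration capped at $35$ boxes misses candidates, and with the corrected budget the number of $\tau$'s is of order $10^{8}$--$10^{9}$ per value of $k$ rather than ``a few thousand,'' so the computation is no longer routine. This is precisely why the paper does not enumerate over $\tau$ at all: it parameterizes the search by the positions of the hooks of length $p$ \emph{and} $q$ simultaneously (Claims~\ref{ClaimHor} and~\ref{ClaimHorVer}), and additionally exploits a third prime $r\approx k/2$ with $r^4\mid\Pi_k$, i.e.\ four boxes of hook length a multiple of $r$, to cut the search to a few hours; it explicitly notes that even the $2p/2q$ subcase is ``impractically slow'' without the $r$-trick.

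Two further points would need repair. First, the hypothesis ``two primes in $[k-\lfloor k/20\rfloor,k]$, with a fallback for finitely many small $k$'' misjudges the range: below $337$ this interval has length at most $16$ and very often contains fewer than two primes (for $k<40$ it has only two integers; for $k=210$ the interval $[200,210]$ contains no prime at all), so the wider-gap fallback is the generic situation, and enlarging the budget accordingly makes the $\tau$-enumeration still heavier. The paper instead builds the whole check on the two largest primes $q<p\le k$ together with $r$, reserving naive enumeration for $k\le 34$ only. Second, the degenerate box of hook length $2p$ or $2q$ is not absorbed by your catalogue with a passing phrase: one must rule it out away from position $(1,1)$ via Lemma~\ref{factorialdivides} plus an explicitly verified prime in $(k,4q-2k-1]$, and then eliminate the $(1,1)$ case by its own enumeration---in the paper this is Claim~\ref{Claim2p}, the most expensive single step. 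Finally, your alternative of invoking Lemma~\ref{ComputationCase2and3} ``well below $k=337$'' is not available as stated: its hypotheses and its proof, as well as Lemma~\ref{Primelemma2} which supplies the contradiction prime $p'$, are established only for $k\ge337$, so that route would require its own case-by-case verification. In short, the skeleton is right, but as specified the algorithm is neither complete nor practical without essentially the repairs the paper actually carries out.
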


The first step is to use a naive algorithm for all $2\leq k\leq 34$, that is, simply running over all partitions of $2k+1$ and computing the hook product; this can be done in 30 minutes.

We define our auxiliary primes as follows. We let $q<p\leq k$ be the two biggest primes in $[1,k]$, and let $r$ be the biggest prime in $[1,\lfloor k/2 \rfloor]$. We note the following preliminary claim, which implies all inequalities we use further on.
\begin{claim}\label{prelimclaim} For all $k\in[35,337]$, and $q,p,r$ as defined above, we have that \[\begin{cases}3q>2k+1\\ 2q+\frac{p-1}{2}+2>2k+1\end{cases}.\]
When $k\neq 40,57$, we furthermore have that \[\begin{cases}2q+r>2k+1\\ q+3r>2k+1\\3r<2q\\5r>2k+1\end{cases}.\]
\end{claim}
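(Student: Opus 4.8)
The statement to prove is Claim~\ref{prelimclaim}, a finite list of elementary inequalities about the two largest primes $q<p\le k$ and the largest prime $r\le\lfloor k/2\rfloor$, valid for all $k\in[35,337]$ (with two listed exceptions $k=40,57$ for the second block). Since this is a bounded range, the cleanest proof is simply to observe that the claim is a finite verification: one tabulates $q,p,r$ for each $k$ in $[35,337]$ and checks the six inequalities directly. I would present it as a computer check in the same spirit as the other finite verifications in the paper (Lemma~\ref{Primelemma1}, Lemma~\ref{Primelemma2}, Fact~\ref{Fact1}), referencing the SAGE worksheet. But I would also want to extract the conceptual reason the inequalities hold, both to make the check transparent and to isolate exactly where the two exceptions $k=40,57$ come from.

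The conceptual input is a strong-enough prime gap bound in the relevant range. For $k\in[35,337]$, the gap between consecutive primes below $k$ is small relative to $k$: concretely $p-q$ and $k-p$ are bounded by a small constant times $\sqrt{k}$ or better (one can cite Lemma~\ref{Primelemma1}, which already guarantees two primes in $[k-\lfloor k/20\rfloor,k]$, giving $q\ge k-\lfloor k/20\rfloor$, i.e. $q\ge \tfrac{19}{20}k$ roughly). From $q\ge\tfrac{19}{20}k$ one gets $3q\ge\tfrac{57}{20}k>2k+1$ for $k\ge 35$, and $5r$: since $r$ is the largest prime $\le\lfloor k/2\rfloor$ and by Lemma~\ref{Primelemma1} applied to $\lfloor k/2\rfloor$ (valid once $\lfloor k/2\rfloor\ge 337$, which does \emph{not} hold here — hence the need for a direct check) one expects $r\ge\tfrac{19}{20}\cdot\tfrac{k}{2}$, giving $5r\ge\tfrac{19}{8}k>2k+1$. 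Similarly $2q+r$, $q+3r$, $3r<2q$, and $2q+\tfrac{p-1}{2}+2$ all follow once $q$ is close to $k$ and $r$ close to $k/2$. So the structure of the argument is: (i) lower bounds $q\ge k-g_1$, $p\ge k-g_2$, $r\ge \lfloor k/2\rfloor - g_3$ with explicit small gap bounds $g_i$; (ii) feed these into each of the six inequalities; (iii) the inequalities fail to close only when $r$ is unusually far below $k/2$ (a large prime gap just below $\lfloor k/2\rfloor$), which happens for exactly $k=40$ ($r=17$, gap to $23$) and $k=57$ ($r=23$, gap to $29$), and these are simply excluded.

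The main obstacle is nothing deep — it is the finiteness/bookkeeping issue: there is no slick closed-form argument that covers all $k\in[35,337]$ uniformly without the gap data, because the relevant prime gaps in that range are erratic, and the required margins ($3q>2k+1$ needs $q>\tfrac{2}{3}k$, which is comfortably true, but $5r>2k+1$ needs $r>\tfrac{2}{5}k$, which is tight and genuinely fails for $k=40,57$). So I would not try to avoid the computation; instead I would state the claim holds by direct verification over the $303$ values of $k$, note that it amounts to checking six inequalities each of which reduces (via $q\ge\tfrac{19}{20}k$ etc.) to a statement that is clearly true except near the two exceptional gaps, and record that the SAGE worksheet performs this check. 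The inequalities of the first block ($3q>2k+1$ and $2q+\tfrac{p-1}{2}+2>2k+1$) are the easiest and hold for \emph{all} $k$ in range; the second block is where the exceptions live, and I would explicitly point to $k=40$ and $k=57$ as the unique failures of $5r>2k+1$ (equivalently of $2q+r>2k+1$ and $q+3r>2k+1$ in those cases), so the reader sees why the hypothesis $k\ne 40,57$ is exactly what is needed.

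\begin{proof}
This is a finite verification. For each $k\in[35,337]$ the primes $q<p\le k$ and $r\le\lfloor k/2\rfloor$ are determined, and the listed inequalities can be checked one by one; this is carried out in the SAGE worksheet. We indicate why the inequalities hold. By Lemma~\ref{Primelemma1} the interval $[k-\lfloor k/20\rfloor,k]$ contains at least two primes, so $q\ge k-\lfloor k/20\rfloor\ge\tfrac{19}{20}k$ and a fortiori $p\ge q$. Hence $3q\ge\tfrac{57}{20}k>2k+1$ for $k\ge 35$, which is the first inequality; and $2q+\tfrac{p-1}{2}+2\ge 2q+2>\tfrac{38}{20}k+2>2k+1$ fails only for the smallest $k$, where one checks it directly, so in fact it holds throughout $[35,337]$.

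For the second block one needs a good lower bound on $r$. For $k\ge 35$ one has $\lfloor k/2\rfloor\ge 17$, and running through the primes one finds that in the range $k\in[35,337]$ the largest prime $r\le\lfloor k/2\rfloor$ satisfies $r>\tfrac{2}{5}k$ with exactly two exceptions: $k=40$, where $\lfloor k/2\rfloor=20$ and $r=17$; and $k=57$, where $\lfloor k/2\rfloor=28$ and $r=23$. Excluding these, $r>\tfrac{2}{5}k$ gives at once $5r>2k\ge 2k+1-1$, hence $5r\ge 2k+1$ with equality impossible since $5r$ is odd and at least, upon rechecking the exact prime values, strictly larger; thus $5r>2k+1$. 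Together with $q\ge\tfrac{19}{20}k$ this yields $2q+r>\tfrac{19}{20}k+\tfrac{2}{5}k\cdot\tfrac{5}{2}\cdot\tfrac{2}{5}$, and more simply $2q+r\ge 2q\ge\tfrac{19}{10}k$, which combined with $r>\tfrac{2}{5}k$ gives $2q+r>\tfrac{19}{10}k+\tfrac{2}{5}k=\tfrac{23}{10}k>2k+1$; likewise $q+3r>\tfrac{19}{20}k+\tfrac{6}{5}k>2k+1$. Finally $3r<2q$ follows from $r\le\tfrac{k}{2}$ and $q\ge\tfrac{19}{20}k$, since $3r\le\tfrac{3}{2}k<\tfrac{19}{10}k\le 2q$. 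The two excluded values $k=40,57$ are genuine failures of $5r>2k+1$ (and of $2q+r>2k+1$, $q+3r>2k+1$), which is precisely why the hypothesis $k\ne 40,57$ is imposed; all remaining $k\in[35,337]$ satisfy every inequality, as the worksheet confirms.
\end{proof}
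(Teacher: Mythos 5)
Your bottom line---that the claim is a finite verification over $k\in[35,337]$, done by computer---is exactly the paper's proof, which consists of the single sentence that this is a straightforward computer check. If you had stopped there, the two proofs would coincide. The problem is the ``conceptual'' justification you add, which is wrong in several concrete places and cannot be left standing as written. First, Lemma~\ref{Primelemma1} is stated only for $k\ge 337$, so it cannot be invoked on the range $[35,337)$, and the bound $q\ge\tfrac{19}{20}k$ you extract from it is simply false there: for $k=35$ the two largest primes $\le k$ are $31$ and $29$, so $q=29<33.25$, and for $k=126$ they are $113$ and $109$, so $q=109<119.7$. Hence your derivations of $3q>2k+1$, $3r<2q$, $2q+r>2k+1$ and $q+3r>2k+1$ from that bound are unsupported (the inequalities are true, but only the computation shows it). Second, your treatment of $2q+\tfrac{p-1}{2}+2>2k+1$ is broken: dropping the $\tfrac{p-1}{2}$ term leaves $2q+2>2k+1$, i.e.\ $q\ge k$, and the chain $2q+2>\tfrac{38}{20}k+2>2k+1$ only holds for $k<10$; the $\tfrac{p-1}{2}$ term is essential, so this inequality too rests on the computer check alone.

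Third, your diagnosis of the exceptional values is factually incorrect. For $k=40$ the largest prime $\le\lfloor k/2\rfloor=20$ is $r=19$, not $17$, and $r>\tfrac25 k$ holds at both $k=40$ and $k=57$; what actually fails is $2q+r>2k+1$ at $k=40$ (there $p=37$, $q=31$, and $2\cdot31+19=81=2k+1$) and $5r>2k+1$ at $k=57$ (there $r=23$ and $5\cdot23=115=2k+1$). Both failures are \emph{equalities}, so your parenthetical argument that equality of $5r$ and $2k+1$ is impossible is also wrong (both are odd, and equality does occur at $k=57$). In short: keep the statement that the claim is verified by direct computation for each $k\in[35,337]$---that is the paper's proof and it is sufficient---but delete or correct the heuristic paragraph, since as written it misapplies Lemma~\ref{Primelemma1}, asserts a false bound on $q$, and misidentifies both the exceptional prime values and the inequalities that force the exclusion of $k=40,57$.
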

\begin{verification} This is a straightforward check by computer.
\end{verification}
The strategy is now to use the fact that $p^2q^2r^4\mid \Pi_k$, to restrict the possibilities for the shape of the partitions that have $\frac{1}{2}\Pi_k$, $\Pi_k$, or $2\Pi_k$ as a hook product.

\begin{claim}\label{Claim2p} For all $k\in[35,337]$, a partition of $2k+1$ with hook product equal to $\frac{1}{2}\Pi_k$, $\Pi_k$, or $2\Pi_k$, cannot have a box of hook length $2p$ or $2q$.
\end{claim}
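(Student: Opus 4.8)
The plan is to argue by contradiction. Suppose $\lambda$ is a partition of $2k+1$ with $\Pi(\lambda)\in\{\tfrac12\Pi_k,\Pi_k,2\Pi_k\}$ that has a box of hook length $2p$ or $2q$. Conjugation preserves the hook product, and every inequality of Claim~\ref{prelimclaim} that mentions $q$ remains valid after the roles of $p$ and $q$ are interchanged (since $p>q$, the larger prime only acquires the larger coefficient), so it suffices to exclude a box $B$ of hook length $2q$. The first task is to read off the exact prime valuations forced by the hypothesis. Since $\Pi(\lambda)/\Pi_k\in\{\tfrac12,1,2\}$ is coprime to the odd primes $p,q,r$, and since $p,q\le k<2p,2q$, $\ 2r\le k<3r$ and $p^2,q^2,r^2>2k+1$ (all immediate from Claim~\ref{prelimclaim}), one gets $v_p(\Pi_k)=v_q(\Pi_k)=2$ and $v_r(\Pi_k)=4$, hence $v_p(\Pi(\lambda))=v_q(\Pi(\lambda))=2$ and $v_r(\Pi(\lambda))=4$. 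A partition of $2k+1$ has no box of hook length exceeding $2k+1$, so the bounds $3p,3q>2k+1$ and $5r>2k+1$ of Claim~\ref{prelimclaim} force every hook length divisible by $p$ (respectively $q$, $r$) to lie in $\{p,2p\}$ (respectively $\{q,2q\}$; respectively $\{r,2r,3r,4r\}$), and each such box carries exactly one factor of the relevant prime. Thus $\lambda$ has precisely two boxes of hook length in $\{p,2p\}$, precisely two of hook length in $\{q,2q\}$ (one of which is $B$), and precisely four of hook length in $\{r,2r,3r,4r\}$.

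The substance of the argument is geometric, resting on the fact that the hook of $B$ covers $2q>\tfrac23(2k+1)$ of the boxes of $\lambda$, leaving fewer than $\tfrac13(2k+1)<q\le p$ boxes outside it. The first step locates $B$ at $(1,1)$: if $B$ lay elsewhere then, after possibly conjugating, the box $(1,2)$ would have hook length at least $2q$, whence Lemma~\ref{factorialdivides} gives $(4q-2k-1)!\mid\Pi(\lambda)$; for $k\in[35,337]$ one verifies that $4q-2k-1>k$ and that the interval $(k,\,4q-2k-1]$ contains a prime $\ell$, and then $\ell\mid(4q-2k-1)!\mid\Pi(\lambda)$ although $v_\ell(\Pi_k)=v_\ell((2k+1)k!^2)=0$ (as $\ell>k$ and $\ell\nmid 2k+1$), a contradiction; the same argument applied to a box of hook length $2p$ shows none can exist either. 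Hence $B=(1,1)$, so $\lambda_1+\lambda_1^{*}=2q+1$, every box other than $(1,1)$ has hook length at most $2q-1$, and the $2k+1-2q<q$ boxes lying off the first row and first column form a small corner region $\nu$. In particular no box has hook length $2p$, so the two $\{p,2p\}$-boxes have hook length exactly $p$; and since any box inside $\nu$ has hook length at most $|\nu|<q\le p$, the two boxes of hook length $p$, the remaining box of hook length $q$, and the four boxes of hook length a multiple of $r$ must all lie on the first row or the first column. Reading off the hook lengths along the first row and column — which are $\lambda_1-1,\lambda_1-2,\dots$ and $\lambda_1^{*}-1,\lambda_1^{*}-2,\dots$, displaced only by the cells of $\nu$ — turns the conditions of having exactly two hook lengths equal to $p$, exactly two in $\{q,2q\}$, and exactly four in $\{r,2r,3r,4r\}$ into explicit arithmetic constraints on $\lambda_1$, $\lambda_1^{*}$ and $\nu$; the inequalities $2q+\tfrac{p-1}{2}+2>2k+1$, $2q+r>2k+1$, $q+3r>2k+1$, $3r<2q$ and $5r>2k+1$ of Claim~\ref{prelimclaim} are precisely what makes these constraints mutually inconsistent — for instance, they force the number of multiples of $r$ among these hook lengths to differ from $v_r(\Pi(\lambda))=4$.

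The main obstacle I anticipate is the bookkeeping in this last step: one must run through the few ways the corner region $\nu$ can be shaped (extra cells on the second row only, on the second column only, or spread over both), track how each placement perturbs the hook lengths of the first row and column, and check case by case that the valuation requirements for $p$, $q$ and $r$ cannot all hold at once. This is a finite but delicate check, exactly the sort of thing it is natural to carry out by computer in this section. The two values $k=40$ and $k=57$, for which the sharper inequalities of Claim~\ref{prelimclaim} degenerate to equalities, are disposed of by an ad hoc verification. Apart from Lemma~\ref{factorialdivides}, the conjugation-invariance of $\Pi$, and Claim~\ref{prelimclaim}, no external input is required.
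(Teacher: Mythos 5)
Your opening moves match the paper: reducing to exact valuations $v_p=v_q=2$, $v_r=4$, forcing any box of hook length $2p$ or $2q$ onto position $(1,1)$ via Lemma~\ref{factorialdivides} together with a computer-verified prime in $(k,4q-2k-1]$, and treating the $2p$ and $2q$ cases by one symmetric argument are all fine and agree with the paper's first paragraph of the verification.

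The gap is in everything after that. The paper does not claim that a partition with $(1,1)$-hook $2q$ and the forced placement of the $p$-, $q$- and $r$-boxes on the first row and column cannot exist; it enumerates by computer all partitions compatible with such a configuration (parametrised by the arm/leg data $a,b,c,d$, with $r$ used only to shrink the search for $k\neq 40,57$) and checks that none of their \emph{hook products} equals $\tfrac12\Pi_k$, $\Pi_k$ or $2\Pi_k$. You replace this by the much stronger assertion that the valuation conditions alone are ``mutually inconsistent'' with the geometry, ``for instance'' because the count of $r$-multiples comes out different from $4$. That assertion is the entire content of the claim, and you neither prove it nor specify the finite check precisely enough to run it: you do not rule out hooks $3r$ or $4r$ in general, and the corner region $\nu$ is an arbitrary partition of $2k+1-2q$ (which can exceed $30$ boxes), not just ``cells on the second row, the second column, or both'', so the row- and column-hook values it produces are far more flexible than your sketch suggests. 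Moreover, for $k=40,57$ the $r$-inequalities of Claim~\ref{prelimclaim} fail, so your proposed mechanism is unavailable there, yet those cases are dismissed as ``an ad hoc verification''. If the claimed inconsistency fails for even one $k$ in $[35,337]$ — and nothing you write establishes that it cannot — the only recourse is to compare hook products of the surviving candidates, which is exactly the computation the paper performs and which your proposal omits. As it stands, the decisive step is asserted rather than proved.
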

\begin{verification}
We first show that a hypothetical box of hook length $2p$ or $2q$ should lie on position $(1,1)$. 
If that is not the case, Lemma \ref{factorialdivides} gives us that $(4q-2k-1)!$ divides the hook product. Note that by definition $4q-2k-1<2k+1$. However, one may verify that there is prime in $[k+1,4q-2k-1]$ for all $35\leq k \leq 337$, which then divides the hook product of this partition but does not divide $\frac{1}{2}\Pi_k$, $\Pi_k$, or $2\Pi_k$. So a box of hook length $2p$ or $2q$ can only be present on position $(1,1)$.

Now we eliminate the possibility that the box on position $(1,1)$ has hook length $2p$. We use a computer to run over all partitions of $2k+1$ having a box of hook length $2p$ on position $(1,1)$, and having a box of hook length $q$ in both the first row and column -- which they necessarily have since $2p+q>2k+1$. Denote by $a$ the number of boxes in the leg of the box of hook length $q$ on position $(1,c+1)$, and let $b$ be the number of boxes in the arm of the box of hook length $q$ on position $(d+1,1)$. Since this is a partition, $b\geq c \Longleftrightarrow a\geq d$. Because conjugate partitions have the same hook product, we may assume $c\geq d$. Furthermore it holds that \[2p=2q-1+d+c-a-b,\] and since the total number of boxes should be at most $2k+1$, \begin{equation}\label{eq:range1}
2p+bd+\max(a(c-b),0) + \max(c(a-d),0)\leq 2k+1.
\end{equation}

We will check for all possible tuples $(a,b,c)$ -- which fixes the value of $d$ -- that the hook product of all corresponding partitions does not equal $\frac{1}{2}\Pi_k, \Pi_k$ or $2\Pi_k$. We bound the range of the variables using the inequalities $a+b+2p\leq 2k+1$ and $c+d-1+2q-1\leq 2k+1$. We run over the tuples $(a,b,c)$ in the following range \[\begin{cases}a\in[0,2k+1-2p]\\b\in[0,2k+1-2p-a]\\c\in[1,2k+1-2q+1]\\d=2p-2q+1-c+a+b\end{cases},\] and in each loop we first check that \eqref{eq:range1} holds, that $1\leq d\leq c$ and that $b\geq c \Longleftrightarrow a\geq d$ holds. Then we compute the hook products of all partitions of this form. To make sure we only consider partitions corresponding to the described situation, we distinguish two cases. If $b<c$, we check all partitions containing \[(c+q-a, (c+1)^a, (b+1)^{d-a}, 1^{q-b-1}),\] and which are contained in \[((c+q-a)^{a+1}, c^{d-a-1}, (b+1)^{q-b}).\] If $b\geq c$, we check all partitions containing \[(c+q-a, (b+1)^d, (c+1)^{a-d}, 1^{q-b-1-(a-d)}),\] and which are contained in \[((c+q-a)^{d},(b+1)^{a-d+1}, c^{q-b-1-(a-d)}).\]

However, this check is impractically slow to implement, in particular for those $k$ with small $q$. Therefore, we consider also the position of the four boxes that have hook length a multiple of $r$, when $k\neq 40,57$. Since $2p+r>2k+1$, no box of hook length at least $r$ can occur outside the first row or column. Since $3r+q>2k+1$, no box of hook length at least $3r$ can occur on the first row or column, and we deduce that a box of hook length $2r$ (and $r$) must be present in both the first row and column. We may thus, for every $k\neq40,57$, replace $q$ by $2r$ to significantly speed up the algorithm. 

Finally we check for the case of a box with hook length $2q$ on position $(1,1)$, and boxes of hook length $p$ on both the first row and column, completely analogously as above with $p$ and $q$ interchanged. Again, since $2q+r>2k+1$ and $3r+p>2k+1$, we may use $2r$ instead of $p$ for $k\neq 40,57$.

This check has been successfully performed for all integers $k \in [35,337]$ in one hour.         
\end{verification}

We now reduce the possible shape of a partition of $2k+1$ with hook product $\frac{1}{2}\Pi_k, \Pi_k, $ or $2\Pi_k$ to three qualitative cases.

\begin{claim}\label{Claimshape} For all $k\in[35,337]$, a partition of $2k+1$ with hook product $\frac{1}{2}\Pi_k, \Pi_k, $ or $2\Pi_k$ either has boxes of hook length $p$ and $q$ on both the first row and column, on both the first and second row, or on both the first and second column. \end{claim}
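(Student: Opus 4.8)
The strategy is to recover the case division of the proof of Theorem~\ref{Theorem1.1}, with Claim~\ref{prelimclaim} playing the role of the prime gap estimate of Lemma~\ref{Primelemma1}, which is not available for $k<337$. By Claim~\ref{Claim2p} the partition has no box of hook length $2p$ or $2q$, and since $3q>2k+1$ forces $3p>2k+1$ as well, no hook length is a multiple of $p$ or of $q$ other than $p$, resp.\ $q$, itself. Because $p^2q^2\mid\Pi_k$ and the hook product equals $\tfrac12\Pi_k$, $\Pi_k$, or $2\Pi_k$ while $p,q$ are odd, the hook product is still divisible by $p^2q^2$; hence there are exactly two boxes of hook length $p$ and exactly two of hook length $q$. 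Two boxes of equal hook length are incomparable in the containment order, so the $p$-boxes split into a ``north-east'' box $A$ and a ``south-west'' box $B$ (the former having strictly smaller row index and strictly larger column index), and likewise the $q$-boxes into $C$ and $D$.

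The next step is to localise these four boxes near the corner. A box in position $(i,j)$ with $i,j\geq 3$ and hook length $h$ forces the diagram to have at least $3h+6$ boxes, since rows $1,2,3$ each have length at least $j+\mathrm{arm}\geq 3+\mathrm{arm}$ and columns $1,2,3$ each have length at least $i+\mathrm{leg}\geq 3+\mathrm{leg}$, with $\mathrm{arm}+\mathrm{leg}=h-1$. As $h\geq q$ and $3q>2k+1$, this is impossible, so each of $A,B,C,D$ has row index at most $2$ or column index at most $2$. Combined with the incomparability relations this leaves only finitely many relative position patterns for $(A,B)$ and $(C,D)$, and the inequality $2q+\tfrac{p-1}{2}+2>2k+1$ of Claim~\ref{prelimclaim} --- which says that the two hooks of length $p$ cover all but a bounded number of the $2k+1$ boxes --- narrows them further, as does, for $k\neq 40,57$, the location of the four boxes whose hook length is a multiple of $r$.

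Finally I would run through the surviving patterns. In the generic ones $A$ and $C$ lie on the first row and $B$ and $D$ on the first column, or $A,C$ lie on the first row and $B,D$ on the second, or the conjugate situation holds --- exactly the three alternatives of the claim (by the same inequality $2q+\tfrac{p-1}{2}+2>2k+1$, the boxes not lying in the two hooks of length $p$ are too few to accommodate a box of hook length $q$ outside the rows and columns those hooks occupy). A small number of genuinely degenerate patterns survive, most notably the analogue of ``Case~1'' in the proof of Theorem~\ref{Theorem1.1}, where a box of hook length $p$ would sit at position $(2,2)$, and these are eliminated by box counts using the remaining inequalities of Claim~\ref{prelimclaim} ($2q+r>2k+1$, $q+3r>2k+1$, $3r<2q$, $5r>2k+1$), exactly as Case~1 was dispatched there, possibly after interchanging $p$ with $q$ or passing to the conjugate partition; any residual configurations, together with the exceptional values $k=40,57$, are checked by computer as elsewhere in this section. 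The main obstacle is precisely this case bookkeeping: lacking an $O(k/20)$ bound on the number of boxes outside the two $p$-hooks, one cannot collapse the analysis as cleanly as in Theorem~\ref{Theorem1.1}, so the degenerate corner configurations must be excluded individually with the tailor-made inequalities of Claim~\ref{prelimclaim}.
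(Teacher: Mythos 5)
Your opening step matches the paper: $3q>2k+1$ together with Claim~\ref{Claim2p} leaves exactly two boxes of hook length $p$ and two of hook length $q$, and your $3h+6$ count correctly places each of these four boxes in the first two rows or the first two columns. After that, however, the argument stops being a proof. The whole content of the claim is the elimination of every configuration other than the three listed, and you do not carry this out: you assert that the ``generic'' patterns give the three alternatives, that ``degenerate'' ones fall to box counts using the $r$-inequalities, and that ``residual configurations'' together with $k=40,57$ can be delegated to a computer, but no pattern is actually enumerated and no count is actually performed. The one argument you do sketch for the generic case is also flawed as stated: a box of hook length $q$ whose row and column avoid those occupied by the two $p$-hooks can still have its hook meet the $p$-hooks (in the leg of the row-one $p$-box or the arm of the column-one $p$-box), so one cannot simply compare $q$ with the number of boxes outside the $p$-hooks; and a $q$-box lying in a row or column that a $p$-hook does occupy but which is not one of the two lines of the claim (for instance a $q$-box at $(i,2)$, $i\geq 3$, while the $p$-boxes sit on the first row and column) violates the conclusion and is not addressed by your parenthetical at all. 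You also misremember the model you invoke: in Theorem~\ref{Theorem1.1} the degenerate case has the $p$-boxes at $(1,2)$ and $(2,1)$, not $(2,2)$, and it is dispatched via the prime $q$, not via $r$.

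The paper closes exactly this gap with one organizing observation you are missing: each box of hook length $p$ has an arm or a leg containing at least $\frac{p-1}{2}$ boxes. This produces precisely four cases according to which arm/leg of each $p$-box is long; the long arm or leg is forced onto the first or second row/column because $2p-1+\frac{p-1}{2}>2k+1$; the $q$-boxes are then pinned down by the counts $p+q+\frac{p-1}{2}>2k+1$ and $2q+\frac{p-1}{2}+2>2k+1$; and the mixed case with $p$-boxes at $(1,2)$ and $(2,1)$ dies by the same kind of count. This is a complete hand argument valid for every $k\in[35,337]$, using only the first two inequalities of Claim~\ref{prelimclaim}; it needs neither the $r$-inequalities (which fail for $k=40,57$) nor any computer search, both of which your plan leans on. To turn your proposal into a proof you would need to supply this case analysis, or an equivalent explicit enumeration with the counting done in each case.
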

\begin{verification} Since $3q>2k+1$, there cannot be any boxes of hook length $3q$ or larger. By Claim~\ref{Claim2p}, boxes of hook length $2p$ or $2q$ are also impossible, so that we have two boxes of hook length $p$ and two boxes of hook length $q$. Denote the arms and legs of the two boxes of hook length $p$ by $\alpha, \beta,\gamma,\delta$, and let $a,b,c,d$ be the number of boxes in $\alpha, \beta,\gamma,\delta$, as in the proof of Theorem~\ref{Theorem1.1}. Each box of hook length $p$ has one leg containing at least $\frac{p-1}{2}$ boxes. There are then four cases to be distinguished: \\
\textbf{ Case 1:} $a, c \geq \frac{p-1}{2}$. Since $2p-1 +  \frac{p-1}{2} >2k+1$, this means that $\alpha$ and $\gamma$  and are on the first and second row.
If there would be a box of hook length $q$ not on the first two rows, then the number of boxes would be at least $p+q+(p-1)/2>2k+1$, a contradiction.\\
\textbf{ Case 2:} $a, d \geq \frac{p-1}{2}$. As before, this implies that $\alpha$ and $\delta$ are on the first row and column. If there would a box of hook length $q$ not on the first row or column, then the number of boxes would be at least $2q+(p-1)/2+2>2k+1$, contradiction.\\
\textbf{ Case 3:} $b, c \geq \frac{p-1}{2}$. As before, this implies that $\beta$ and $\gamma$ are on the second row and column, so that the boxes of hook length $p$ are in positions $(1,2)$ and $(1,2)$. It follows that $a\geq c-1\geq \frac{p-3}{2}$, and likewise $d\geq \frac{p-3}{2}$. There can potentially be a box of hook length $q$ on position $(2,2)$, but the other box of hook length $q$ has one coordinate at least 3. Thus the number of boxes is at least $p+q+\frac{p-1}{2}>2k+1$, which is a contradiction, and so this case cannot occur.\\
\textbf{ Case 4:} $b, d \geq \frac{p-1}{2}$. This occurs exactly for the conjugates of the partitions in case 1. This then corresponds to the case where there is a box of hook length $p$ and $q$ are on both the first and second column.
\end{verification}

We thus have 3 possible configurations of the boxes of hook length $p$ and $q$. In the following two claims, we check that in each of the 3 cases, such partitions do not have hook product equal to $\frac{1}{2}\Pi_k, \Pi_k, $ or $2\Pi_k$.
\begin{claim}\label{ClaimHor} For all $k\in[35,337]$, a partition of $2k+1$ with hook product $\frac{1}{2}\Pi_k, \Pi_k, $ or $2\Pi_k$ cannot have boxes of hook length $p$ and $q$ on both the first and second row.
\end{claim}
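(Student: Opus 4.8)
The plan is to run the argument of Case~3 in the proof of Theorem~\ref{Theorem1.1}, but to close it off by a direct computer enumeration, since the prime-gap lemmas (Lemmas~\ref{Primelemma1} and~\ref{Primelemma2}) are not available in the range $k\le337$. By Claim~\ref{Claimshape} we are in the situation where the partition has two boxes of hook length $p$ and two of hook length $q$, all four lying on the first two rows. Write $\lambda_1=k+r-\epsilon+1$ and $\lambda_2=k-r$ for the lengths of the first two rows, so that $\epsilon=2k+1-\lambda_1-\lambda_2$ boxes remain below row $2$; since a hook of length $p$ or $q$ sits on the second row, $\lambda_2$ is within $O(k/20)$ of $k$, and the inequalities of Claim~\ref{prelimclaim} give explicit bounds $r\le 3\lfloor k/20\rfloor$ and $\epsilon\le 6\lfloor k/20\rfloor$ of the same shape as in Case~3 (with a separate treatment of $k=40$ and $k=57$, where some of those inequalities fail).

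With $r$ and $\epsilon$ restricted to these small ranges, the part of the diagram below the second row is a partition $\nu$ of $\epsilon$ with first part at most $\lambda_2$, and insisting that hook lengths $p$ and $q$ genuinely occur on rows $1$ and $2$ pins down the column positions of those boxes and sharply limits the admissible $\nu$; this cuts the search to a provably complete and computationally feasible family of shapes $\mu$, for each of which one computes $\Pi(\mu)$ and checks $\Pi(\mu)\notin\{\tfrac12\Pi_k,\Pi_k,2\Pi_k\}$. The search can be further shortened exactly as in Case~3: applying Lemma~\ref{TwoPartitions2} to bound $\Pi(\mu)$ from above, one sees that an equality $\Pi(\mu)=c\Pi_k$ with $c\in\{\tfrac12,1,2\}$ is impossible once $\epsilon$ is even moderately large, because the factor $\epsilon!\,(k+r-\epsilon)^{-(\epsilon-1)}$ coming from that bound decays rapidly; the residual small values of $\epsilon$, including $\epsilon\in\{0,1\}$ which are handled by the elementary estimates at the end of Case~3, are then the only ones needing the explicit enumeration.

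The main obstacle is not any hard inequality but the bookkeeping: one has to be certain that the constraints coming from Claim~\ref{Claimshape}, Claim~\ref{prelimclaim}, and the requirement that $p$ and $q$ appear on the first two rows really do reduce the problem to a finite, correctly enumerated set of partitions, and that the exceptional values $k=40,57$ are disposed of by a brute-force scan over all partitions of $2k+1$ with a box of hook length $p$ or $q$ on the first two rows. Once the enumeration is set up, verifying each $\Pi(\mu)\ne c\Pi_k$ is routine integer arithmetic, and the claim follows.
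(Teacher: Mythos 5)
Your overall strategy --- reduce the configuration to a finite, explicitly bounded family of shapes and check all hook products by computer --- is the same kind of argument the paper itself uses here (this claim is a computer verification). The genuine gap is in how you certify completeness of the search. You assert that Claim~\ref{prelimclaim} gives $r\le 3\lfloor k/20\rfloor$ and $\epsilon\le 6\lfloor k/20\rfloor$ ``of the same shape as in Case~3''. It does not. In Case~3 of Theorem~\ref{Theorem1.1} those bounds come from Lemma~\ref{Primelemma1}, i.e.\ from $p,q\ge k-\lfloor k/20\rfloor$, which is precisely what is unavailable for $k\le 337$. Claim~\ref{prelimclaim} only yields much weaker information: from $2q+\frac{p-1}{2}+2>2k+1$ and $p\le k$ one gets roughly $q>\frac{3k-1}{4}$, nothing like $k-\lfloor k/20\rfloor$. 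Concretely, for $k=126$ the auxiliary primes are $p=113$, $q=109$, and running the Case~3 argument honestly ($d\le 2k-1-2q$, second row of length at least $p-d$) only gives $r\le 3k-1-p-2q=46$, whereas your cap would be $3\lfloor 126/20\rfloor=18$; a whole range of admissible second-row lengths would never be examined, so the enumeration as described is not provably complete. The same defect propagates to your bound on $\epsilon$ and to the analytic pruning: the estimates ``the last two factors are bounded by $2$'' and $\frac{k+r}{k-r}\frac{r^2}{k}\le\frac{7}{5}\frac{r^2}{k}$ that accompany Lemma~\ref{TwoPartitions2} and Lemma~\ref{Factoriallemma} were derived under $k\ge 337$ and $r\le\frac{3k}{20}+1$, and need re-justification once $r$ can be as large as the true prime gaps below $337$ force it to be.

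The gap is fixable, and the paper's verification shows how: rather than importing Case~3 bounds, it derives the loop ranges directly from the hook-length equations and the box-count inequality, parametrizing by the column positions $(x,y,z,u)$ of the four boxes and the row lengths $(R_1,R_2)$, splitting into the three possible orderings, and for each admissible tuple enumerating every partition sandwiched between an explicit minimal and maximal shape; completeness is then immediate from those equations, with no $k/20$-type input. Feasibility is recovered not by analytic pruning but by the auxiliary prime $r$ of Claim~\ref{prelimclaim}: since $p+q+r>2k+1$ and $3r+q>2k+1$, boxes of hook length $r$ and $2r$ must also sit on the first two rows, so one may replace $p$ or $q$ by $2r$ for $k\ne 40,57$; for the two exceptional $k$ the paper simply runs the unaccelerated enumeration, which is considerably cheaper than the brute-force scan over all partitions of $2k+1$ that you propose for those cases.
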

\begin{verification}
We introduce some notation. Let the boxes of hook length $p$ be on positions $(2,x)$ and $(1,y)$, with legs numbering $a$ and $c$ boxes respectively, and let the boxes of hook length $q$ be on positions $(2,z)$ and $(1,u)$, with legs containing $b$ and $d$ boxes respectively. The number of boxes on the first and second row will be denoted by $R_1$ and $R_2$. 

We first state some immediate observations using the definition of the hook length and the fact that this is a partition, \[\begin{cases}x+p-a-1=R_2\\z+q-b-1=R_2\\y+p-c-1=R_1\\u+q-d-1=R_1\end{cases},\begin{cases}x<y\\x<z\\y<u\\z<u\end{cases} ,\begin{cases}a\geq b\\ c\geq d\end{cases}.\] 

It follows that \[\begin{cases} z\leq x+(p-q)\\ u\leq y+(p-q)\end{cases}.\]

We will check for all possible tuples $(x,y,z,u,R_1,R_2)$ -- which fixes the values of $a,b,c,d$ -- that the hook product of all corresponding partitions does not equal $\frac{1}{2}\Pi_k, \Pi_k$ or $2\Pi_k$.

We deduce bounds for the length of the two rows. Since $d-1+2p\leq 2k+1$, we have that \[R_1\geq u+q-(2k+1)-2p-2.\]

If $z\neq y$, we have that $b+2p\leq 2k+1$, and also $z\geq 2$, so that \[z\neq y \Longrightarrow R_2\geq q+2p-2k.\]

We also have that $d\geq 0$ and so $R_1\leq u+q-1.$ 
Also note that $2x-1+2p-1\leq 2k+1$ and so \[x\leq k-p+1.\]
 
We now distinguish 3 different cases according to the relative position of the box of hook length $p$ on the first row and the box of hook length $q$ on the second row.

\noindent \textbf{ Case 1:} $x<z<y<u$, and so $a\geq b\geq c-1 \geq d-1$.

From $b\geq c-1$ it follows that $R_1-R_2 \geq y-z+(p-q)-1$. Note that $y-1+x-1\leq 2k+1-(2p-1)$. We can now run over the tuples $(x,y,z,u,R_1,R_2)$ in the following range

\[\begin{cases}x\in[1,k-p+1]\\z\in[x+1,x+(p-q)]\\y\in[z+1,2k-2p+4-x]\\u\in[y+1,y+(p-q)]\\R_1\in[u+q-(2k+1)-2p-2,u+q-1]\\R_2\in[q+2p-2k,R_1-y+z-(p-q)+1]\end{cases}.\]

In each loop we first check if there can be any partitions with this particular tuple by checking that the number of boxes is at most $2k+1$; that is we check whether $R_1+R_2 +(d-1)u+(c-d)y+(b-c+1)z+(a-b)x\leq 2k+1$. Then we compute the hook products of all partitions of this form; more precisely of all partitions containing \[(R_1,R_2,u^{d-1}, y^{c-d},z^{b-c+1},x^{a-b}),\] which are contained in the partition\[(R_1,R_2^{d},(u-1)^{c-d},(y-1)^{b-c+1},(z-1)^{a-b},(x-1)^{2k+1}).\]

\noindent \textbf{ Case 2:} $x<y=z<u$, and so $a\geq b= c-1 \geq d-1$.

From $b= c-1$ it follows that $R_1-R_2 = y-z+(p-q)-1$. We can now run over the tuples $(x,y,z,u,R_1,R_2)$ in the following range

\[\begin{cases}x\in[1,k-p+1]\\y\in[x+1,x+(p-q)]\\u\in[y+1,y+(p-q)]\\R_1\in[u+q-(2k+1)-2p-2,u+q-1]\\R_2=R_1+1-(p-q)\end{cases}.\]

In each loop we first check if there can be any partitions with this particular tuple by checking that the number of boxes is at most $2k+1$; that is we check whether $R_1+R_2+(d-1)u+(c-d)y+(a-c+1)x \leq 2k+1$. Then we compute the hook products of all partitions of this form; more precisely of all partitions containing \[(R_1,R_2,u^{d-1}, y^{c-d},x^{a-c+1}),\] which are contained in the partition\[(R_1,R_2^{d},(u-1)^{c-d},(y-1)^{a-c+1},(x-1)^{2k+1}).\]

\noindent \textbf{ Case 3:}  $x<y<z<u$, and so $a\geq c-1\geq b \geq d-1$.

From $a\geq c-1\geq b \geq d-1$ it follows that \[\begin{cases}y-x-1\\u-z-1\end{cases} \leq R_1-R_2 \leq y-z+(p-q)-1.\] 

We can now run over the tuples $(x,y,z,u,R_1,R_2)$ in the following ranges 
\[\begin{cases}x\in[1,k-p+1]\\y\in[x+1,x+(p-q)-1]\\z\in[y+1,x+(p-q)]\\u\in[z+1,y+(p-q)]\\R_1\in[u+q-(2k+1)-2p-2,u+q-1]\\R_2\in[\max(q+2p-2k,R_1-y+z-(p-q)+1),\min(R_1-y+x+1,R_1-u+z+1)]\end{cases}.\]

In each loop we first check if there can be any partitions with this particular tuple by checking that the number of boxes is at most $2k+1$; that is we check whether $R_1+R_2+(d-1)u+(b-d+1)z+(c-b-1)y+(a-c+1)x \leq 2k+1$. Then we compute the hook products of all partitions of this form; more precisely of all partitions containing \[(R_1,R_2,u^{d-1}, z^{b-d+1},y^{c-b-1},x^{a-c+1}),\] which are contained in the partition\[(R_1,R_2^{d},(u-1)^{b-d+1},(z-1)^{c-b-1},(y-1)^{a-c+1},(x-1)^{2k+1}).\]

As in Claim \ref{Claim2p}, we consider also the position of the four boxes that have hook length a multiple of $r$, when $k\neq 40,57$. Since $p+q+r>2k+1$, no box of hook length at least $r$ can occur outside the first two rows. Since $3r+q>2k+1$, no box of hook length at least $3r$ can occur, and we deduce that a box of hook length $2r$ (and $r$) must be present in both the first two rows. We may thus, for every $k\neq40,57$, replace $(p,q)$ by $(p,2r)$ or $(2r,q)$, depending on whether $p>2r>q$ or $2r>p$, to significantly speed up the algorithm.

This check has been successfully performed for all integers $k \in [35,337]$ in about 5 minutes.
\end{verification}

Clearly, this also shows that a partition with hook product $\frac{1}{2}\Pi_k, \Pi_k, $ or $2\Pi_k$ cannot have boxes of hook length $p$ and $q$ on both the first and second column. The next claim deals with the remaining option.

\begin{claim}\label{ClaimHorVer} For all $k\in[35,337]$, a partition of $2k+1$ different from $\lambda_k$ with hook product $\frac{1}{2}\Pi_k, \Pi_k,$ or $2\Pi_k$ cannot have boxes of hook length $p$ and $q$ on both the first row and column.\end{claim}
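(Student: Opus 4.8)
The plan is to mirror the computer-assisted enumeration used in the verification of Claim~\ref{ClaimHor}, now carried out for the ``thick hook'' shape that is forced when the boxes of hook length $p$ and $q$ all lie on the first row or the first column. First I would fix notation: write $R_1$ for the length of the first row and $C_1$ for the number of rows, let the box of hook length $p$ on the first row be at $(1,y)$ with leg of size $c$ and the box of hook length $p$ on the first column be at $(x,1)$ with arm of size $a$, and likewise let the boxes of hook length $q$ be at $(1,u)$ with leg $d$ and at $(z,1)$ with arm $b$. The definition of the hook length gives $p=(R_1-y)+c+1=(C_1-x)+a+1$ and $q=(R_1-u)+d+1=(C_1-z)+b+1$, and the fact that rows and columns are non-increasing forces $x\le z$, $y\le u$, hence $a\ge b$ and $c\ge d$; thus every arm and leg size is a function of the tuple $(x,y,z,u,R_1,C_1)$, with a small number of sub-cases according to the relative order of $y,z,u$ (and of whether the various inequalities are strict), exactly as in Claim~\ref{ClaimHor}.

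Next I would bound the interior: since the two hooks of the boxes of hook length $p$ already occupy at least $2p-1$ boxes, at most $2k+2-2p$ boxes lie off the first row and column, and because $p$ is the largest prime $\le k$ this quantity is small (linear in $k/20$); together with $3q>2k+1$ and the other estimates collected in Claim~\ref{prelimclaim}, this pins each of $x,y,z,u$ and $R_1,C_1$ to an explicit range. For every admissible tuple I would then enumerate all partitions of exactly the corresponding shape by bracketing them between an explicit minimal ``contained'' partition and an explicit maximal ``containing'' partition, as in the verification of Claim~\ref{ClaimHor}, compute the hook product of each, and verify that none equals $\tfrac{1}{2}\Pi_k$, $\Pi_k$ or $2\Pi_k$ apart from $\lambda_k=(k+1,1^k)$ itself, which is the degenerate tuple with empty interior and which one checks directly does carry boxes of hook length $p$ and $q$ on both the first row and the first column. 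To keep the search fast for those $k$ with an unusually small $p$ -- where the interior could otherwise be comparatively large -- I would, for $k\ne 40,57$, exploit Claim~\ref{prelimclaim} once more: since $p+q+r>2k+1$ no box of hook length $\ge r$ lies outside the first row and column, and since $3r+q>2k+1$ no box of hook length $\ge 3r$ occurs, so boxes of hook length $2r$ and $r$ must appear on both the first row and the first column, allowing the prime $q$ (or $p$) to be replaced by $2r$ in the enumeration.

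I expect the obstacle here to be computational rather than conceptual: the delicate point is to set up the minimal/maximal bracketing so that it captures \emph{every} partition consistent with a given tuple -- so that nothing is missed -- while keeping the total running time manageable over all $k\in[35,337]$ and all sub-cases. As in the earlier claims, the two values $k=40,57$ must be handled by the slower direct enumeration, since for them the inequalities in Claim~\ref{prelimclaim} involving $r$ fail; this is feasible as they are isolated. Completing this check, and combining it with Claim~\ref{Claim2p}, Claim~\ref{Claimshape}, Claim~\ref{ClaimHor} and the symmetry under conjugation, shows that $\lambda_k$ is the only partition of $2k+1$ with hook product in $\{\tfrac{1}{2}\Pi_k,\Pi_k,2\Pi_k\}$ for all $k\in[35,337]$, which is precisely Fact~\ref{Fact1} in that range.
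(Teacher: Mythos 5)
Your overall strategy is the same as the paper's: for each $k\in[35,337]$ enumerate by computer all partitions of $2k+1$ compatible with having hooks of length $p$ and $q$ on both the first row and the first column, parametrised by a small tuple of positions and arm/leg lengths, bracket the admissible shapes between an explicit minimal and maximal partition, and check that no hook product equals $\frac{1}{2}\Pi_k$, $\Pi_k$ or $2\Pi_k$ apart from $\lambda_k$. (Two implementation-level differences: the paper's sub-case split is by the number of intersections among the arms and legs $\alpha,\beta,\gamma,\delta$ --- five cases, of which the two-intersection case is impossible --- rather than by the order of $y,z,u$; and its range bounds come from inequalities such as $x+z-1+2p-1\le 2k+1$ and $z\min(c+1,x)+2p-1\le 2k+1$, not from a count of boxes off the first row and column, which is not what the two $p$-hooks covering $2p-1$ boxes gives you, since those hooks are not contained in the first row and column. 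These are details you would likely fix when coding.)

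The genuine gap is in your $r$-speedup, on which you rely to make the search feasible. In this configuration the corner box $(1,1)$ has hook length $R_1+C_1-1$, which is large (for $\lambda_k$ itself it is $2k+1>3r$), so your assertion that ``no box of hook length $\ge 3r$ occurs'' is false: the inequality $3r+q>2k+1$ only excludes such boxes \emph{away from} position $(1,1)$. Hence you cannot conclude that boxes of hook length $r$ and $2r$ appear on both the first row and the first column. If the corner hook length equals $3r$ or $4r$ (and $5r>2k+1$ rules out higher multiples), it already supplies one of the four factors of $r$ required by $r^4\mid\Pi_k$, and the remaining three divisible hook lengths need not include a $2r$ on both the row and the column; replacing $q$ (or $p$) by $2r$ unconditionally would then make the enumeration non-exhaustive and the verification invalid. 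The paper handles exactly this point: it makes the replacement only under the assumption that the $(1,1)$ hook length is not a multiple of $r$, and separately checks the cases where that hook length is $3r$ or $4r$, using the bounds on $R_1+C_1$ in each sub-case to localise where these can occur ($3r$ only in the four-intersection case since $3r<2q$; $4r$ only in the cases permitted by the size of $2r$ relative to $p$ and $q$). To repair your proposal you must either add these corner cases to the accelerated search or drop the speedup and argue that the unaccelerated enumeration is still feasible for the $k$ with small $p$.
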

\begin{verification}
We introduce some notation. Let the boxes of hook length $p$ be on positions $(1,x+1)$ and $(z+1,1)$, with leg and arm respectively denoted by $\alpha$ and $\gamma$. Let the boxes of hook length $q$ be on positions $(1,y+1)$ and $(u+1,1)$, with leg and arm respectively denoted by $\beta$ and $\delta$. We will denote the cardinality of $\alpha, \beta, \gamma, \delta$ by $a,b,c,d$. The number of boxes on the first row and column will be denoted respectively by $R_1$ and $C_1$. 

We first state some immediate observations using the definition of the hook length and the fact that this is a partition, \[\begin{cases}x+p-a=R_1\\y+q-b=R_1\\z+p-c=C_1\\u+q-d=C_1\end{cases},\begin{cases}x<y\\d\leq c\end{cases} ,\begin{cases}a\geq b \\z< u\end{cases}.\] 

It follows that \[\begin{cases}  y\leq x+(p-q)\\ c< d+(p-q)\end{cases}.\]

We will check for all possible tuples $(x,y,c,d,C_1,R_1)$ -- which fixes the values of $a,b,z,u$ -- that the hook product of all corresponding partitions does not equal $\frac{1}{2}\Pi_k, \Pi_k$ or $2\Pi_k$, unless the partition is $\lambda_k$.

Note that since conjugated partitions have the same hook product, we may assume that $z\geq x$. From this we deduce that \[C_1\geq x+p-c,\] and also since $x+z-1+2p-1\leq 2k+1$, that \[x\leq k-p+1.\] 

Since $b\geq 0$, we have that \[R_1\leq y+q.\] 

Now note that $z\min(c+1,x)+2p-1\leq 2k+1$, from which it follows that \[C_1\leq p-c+\frac{2k-2p+2}{\min(c+1,x)}.\]

We distinguish 5 cases according to how many intersections there are between the arms and legs $\alpha$, $\beta$, $\gamma$, $\delta$.

\noindent \textbf{Case 0: 0 intersections} : $y>x>c\geq d$, and so $b\leq a<z<u$.

From $a<z$ it follows that $2p+x-c<C_1+R_1$. We can now run over the tuples $(x,y,c,d,C_1,R_1)$ in the following range

\[\begin{cases}d\in[0,k-p]\\c\in[d,d+(p-q)-1]\\x\in[c+1,k-p+1]\\y\in[x+1,x+(p-q)]\\C_1\in[x+p-c,p-c+\lfloor\frac{2k-2p+2}{c+1}\rfloor]\\R_1\in[2p-c+x-C_1+1,y+q]\end{cases}.\]

In each loop we first check if there can be any partitions with this particular tuple by checking that the number of boxes is at most $2k+1$; that is we check whether $R_1+C_1-1+by+(a-b)x+(z-a)c+(u-z)d \leq 2k+1$. Then we compute the hook products of all partitions of this form; more precisely of all partitions containing \[(R_1,(y+1)^{b}, (x+1)^{a-b},(c+1)^{z-a},(d+1)^{u-z},1^{C_1-u-1}),\] which are contained in the partition\[(R_1^{b+1},y^{a-b},x^{z-a-1},(c+1)^{u-z},(d+1)^{C_1-u}).\]

\noindent \textbf{Case 1 : 1 intersection} : $y>c\geq x> d$, and so $b<z\leq a<u$.

From $b<z\leq a<u$ it follows that \[\begin{cases}p+q+y-c\\p+q+x-d\end{cases}<C_1+R_1\leq 2p+x-c.\] 
We can now run over the tuples $(x,y,c,d,C_1,R_1)$ in the following range

\[\begin{cases}d\in[0,k-p]\\x\in[d+1,k-p+1]\\c\in[x,d+(p-q)-1]\\y\in[c+1,x+(p-q)]\\C_1\in[x+p-c,p-c+\lfloor\frac{2k-2p+2}{x}\rfloor]\\R_1\in[\max(p+q+y-c-C_1,p+q+x-d-C_1)+1,\min(2p-c+x-C_1,y+q)]\end{cases}.\]

In each loop we first check if there can be any partitions with this particular tuple by checking that the number of boxes is at most $2k+1$; that is we check whether $R_1+C_1-1+by+(z-b)c+(a-z)x+(u-a)d\leq 2k+1$. Then we compute the hook products of all partitions of this form; more precisely of all partitions containing \[(R_1,(y+1)^{b}, (c+1)^{z-b},(x+1)^{a-z},(d+1)^{u-z},1^{C_1-u-1}),\] which are contained in the partition\[(R_1^{b+1},y^{z-b-1},(c+1)^{a-z+1},x^{u-a-1},(d+1)^{C_1-u}).\]

\noindent \textbf{Case 2 : 2 intersections} : We show that this case is actually impossible.

Up to conjugation, we are in the situation that $c\geq y>x>d$, and so $z\leq b\leq a<u$. From this it follows that $p-q = (y-x)+(a-b)<(c-d)+(u-z)=p-q$, a contradiction.

\noindent \textbf{Case 3 : 3 intersections} : $c\geq y> d\geq x$, and so $z\leq b<u\leq a$.

From $z\leq b<u\leq a$ it follows that \[2q+y-d<C_1+R_1\leq \begin{cases}p+q+y-c\\p+q+x-d\end{cases}.\] 

We can now run over the tuples $(x,y,c,d,C_1,R_1)$ in the following range 

\[\begin{cases}x\in[1,k-p+1]\\d\in[x,x+(p-q)-1]\\y\in[d+1,x+(p-q)]\\c\in[y,d+(p-q)-1]\\C_1\in[x+p-c,p-c+\lfloor\frac{2k-2p+2}{x}\rfloor]\\R_1\in[2q-d+y-C_1+1,\min(p+q+y-c-C_1,p+q+x-d-C_1)]\end{cases}.\]

In each loop we first check if there can be any partitions with this particular tuple by checking that the number of boxes is at most $2k+1$; that is we check whether $R_1+C_1-1+zc+(b-z)y+(u-b)d+(a-u)x \leq 2k+1$. Then we compute the hook products of all partitions of this form; more precisely of all partitions containing \[(R_1,(c+1)^{z}, (y+1)^{b-z},(d+1)^{u-b},(x+1)^{a-u},1^{C_1-a-1}),\] which are contained in the partition \[(R_1^{z},(c+1)^{b-z+2},y^{u-b-1},(d+1)^{a-u+1},x^{C_1-a-1}).\]

\noindent \textbf{Case 4 : 4 intersections} : $c\geq d\geq y>x$, and so $z<u\leq b\leq a$.

From $u\leq b$ it follows that $C_1+R_1\leq 2q+y-d$. Note that $xa\leq 2k+1-2q$, and so \[R_1\geq x-p-\lfloor\frac{2k-2q+1}{x}\rfloor.\] 

We also have that $d-1\leq 2k+1-2p$, and we can now run over the tuples $(x,y,c,d,C_1,R_1)$ in the following range

\[\begin{cases}x\in[1,k-p+1]\\y\in[x+1,x+(p-q)]\\d\in[y,2k-2p+2]\\c\in[d,d+(p-q)-1]\\C_1\in[x+p-c,p-c+\lfloor\frac{2k-2p+2}{x}\rfloor]\\R_1\in[x+p-\lfloor\frac{2k-2q+1}{x}\rfloor,2q-d+y-C_1]\end{cases}.\]

In each loop we first check if there can be any partitions with this particular tuple by checking that the number of boxes is at most $2k+1$; that is we check whether $R_1+C_1-1+zc+(u-z)d+(b-u)y+(a-b)x\leq 2k+1$. Then we compute the hook products of all partitions of this form; more precisely of all partitions containing \[(R_1,(c+1)^{z}, (d+1)^{u-z},(y+1)^{b-u},(x+1)^{a-b},1^{C_1-a-1}),\] which are contained in the partition \[(R_1^{z},(c+1)^{u-z},(d+1)^{b-u+1},y^{a-b},x^{C_1-a-1}).\]

As in Claim \ref{Claim2p}, we consider also the position of the four boxes that have hook length a multiple of $r$, when $k\neq 40,57$. Since $2q+r>2k+1$, no box of hook length at least $r$ can occur outside the first  row and column. Since $3r+q>2k+1$, no box of hook length at least $3r$ can occur except on position $(1,1)$. Assuming that the hook length of the box $(1,1)$ is not a multiple of $r$, we may deduce that a box of hook length $2r$ (and $r$) must be present in both the first row and column. We may thus, for every $k\neq40,57$, replace $(p,q)$ by $(p,2r)$ or $(2r,q)$, depending on whether $p>2r>q$ or $2r>p$, to significantly speed up the algorithm, provided we check also the case that the box on position $(1,1)$ is $3r$ or $4r$, since $5r>2k+1$.

In light of the bounds on $R_1+C_1$ in each case, we see that $3r$ can only occur in Case 4, since $3r<2q$. A box of hook length $4r$ cannot occur in Case 4 since $4r>2q$(for relevant $r$), but could occur in Case 1 or 3 if $q<2r<p$ or in Case 0 if $p<2r$.

This check has been successfully performed for all integers $k \in [35,337]$ in little over one hour. \end{verification}

This completes the description of the computer verification of Fact~\ref{Fact1}. 
To complete the proof of Theorem~\ref{Theorem1}, we describe how to check the following fact by computer. Recall that $\lambda^{'}_k=(k+1,2,1^{k-1})$, and $\Pi^{'}_k=(2k+1)(k+1)^2(k-1)!^2$.
\begin{fact}\label{Fact2} The only partition of $2k+2$ with hook product equal to $\frac{1}{2}\Pi^{'}_k$, $\Pi^{'}_k$, $2\Pi^{'}_k$, is $\lambda^{'}_k$, for all $2\leq k \leq 337$. \end{fact}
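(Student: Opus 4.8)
The plan is to carry out, almost verbatim, the computer-assisted argument used for Fact~\ref{Fact1}, now adapted from partitions of $2k+1$ with hook product a small multiple of $\Pi_k$ to partitions of $2k+2$ with hook product a small multiple of $\Pi'_k=(2k+1)(k+1)^2(k-1)!^2$. As before, for small $k$ (say up to $k\approx 34$, exactly as in the odd case) one simply enumerates every partition of $2k+2$ and computes its hook product directly. For the remaining range $35\le k\le 337$ one reduces the possible shapes using auxiliary primes and then checks the surviving shapes exhaustively.

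For the auxiliary primes I would take $q<p$ to be the two largest primes not exceeding $k-1$ and $r$ the largest prime not exceeding $\lfloor(k-1)/2\rfloor$; since $p,q,r$ and $2r$ are all at most $k-1$, each occurs at least twice in $(k-1)!^2$, so $p^{2}q^{2}r^{4}\mid\Pi'_k$ --- this is the even-case reason why the argument needs $k\ge 338$ in Theorem~\ref{Theorem1.1} and starts the computer check at $k\le 337$. One then records the analogue of Claim~\ref{prelimclaim}: the short list of inequalities such as $3q>2k+2$, $2q+\tfrac{p-1}{2}+2>2k+2$, and, for all but a few exceptional $k$, $2q+r>2k+2$, $q+3r>2k+2$, $3r<2q$, $5r>2k+2$, each verified by computer. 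The shape reduction then runs through the same four steps: the analogue of Claim~\ref{Claim2p} shows (via Lemma~\ref{factorialdivides} and a prime in a short interval) that no box can have hook length $2p$ or $2q$ unless it is the $(1,1)$ box, which is then excluded by a direct search; the analogue of Claim~\ref{Claimshape} uses a leg-length count to force the two boxes of hook length $p$ and the two of hook length $q$ to lie on the first row together with the first column, or on the first two rows, or on the first two columns; the analogue of Claim~\ref{ClaimHor} eliminates the ``two rows'' and ``two columns'' configurations; and the analogue of Claim~\ref{ClaimHorVer} eliminates the ``row and column'' configuration except for $\lambda'_k=(k+1,2,1^{k-1})$ itself, whose boxes of hook length $p$ sit at $(1,k+2-p)$ and $(k+2-p,1)$. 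As in Fact~\ref{Fact1}, replacing $p$ or $q$ by $2r$ speeds up all the loops for the non-exceptional $k$.

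The genuinely new point --- the one I expect to be the main obstacle --- is the factor $(k+1)^{2}$, which has no counterpart in the odd case where $\Pi_k=(2k+1)k!^{2}$ involves no prime exceeding $k$. Here $k+1$ and $2k+1$ may themselves be prime, and then they contribute ``large'' prime powers to $\Pi'_k$: a candidate partition must then carry two boxes of hook length divisible by $k+1$ (note $3(k+1)>2k+2$, so only hook lengths $k+1$ and $2k+1$ are relevant), and exactly one box of hook length $2k+1$ when $2k+1$ is prime. Using Lemma~\ref{factorialdivides} one sees that a box of hook length $2k+1$ off position $(1,1)$ forces $(2k)!\mid\Pi'_k$, impossible once there is a prime in $[k+2,2k]$, so such a box must sit at $(1,1)$, which pins the shape down to $(\lambda_1,2,1^{c-2})$ with $\lambda_1+c=2k+2$; similarly one checks that a pure hook of $2k+2$ boxes never has the right hook product. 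These extra hook-length constraints must be folded into the preliminary claim and into the ranges of the search loops, and it is quite likely that the exceptional set of $k$ (the analogue of $\{40,57\}$, where one cannot cheaply substitute $2r$) is different; those few values are then handled by running the relevant loops with $p$ and $q$ directly. Granting these bookkeeping adjustments, the verification goes through for all $2\le k\le 337$, and together with Fact~\ref{Fact1} and Theorem~\ref{Theorem1.1} this completes the proof of Theorem~\ref{Theorem1}.
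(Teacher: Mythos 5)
Your plan is, in outline, exactly the paper's: rerun the odd-case machinery (naive enumeration for small $k$, then the analogues of Claims~\ref{prelimclaim}--\ref{ClaimHorVer} driven by $p^2q^2r^4\mid\Pi'_k$) with $2k+1$ replaced by $2k+2$. The one substantive divergence is how the factor $(k+1)^2$ is exploited. The paper absorbs it into the auxiliary primes, taking $q<p$ to be the two largest primes in $[1,k-1]\cup\{k+1\}$ and allowing $r=\frac{k+1}{2}$ when that is prime; this keeps $p,q,r$ as large as possible, so the preliminary inequalities fail only for $k=37,41$ (and the $r$-bounds only for $k=58$), and no separate treatment of hook lengths $k+1$ or $2k+1$ is needed. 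You instead cap $p,q$ at $k-1$ and layer the $k+1$ and $2k+1$ divisibility constraints on top. That is workable, but it weakens the preliminary claim for precisely those $k$ with $k+1$ prime: for instance at $k=40$ your choice gives $p=37$, $q=31$, and $2q+\frac{p-1}{2}+2=82\not>2k+2$, whereas the paper's $p=41$ satisfies all the inequalities; so your exceptional set is strictly larger than the paper's. Moreover your stated remedy for exceptions (rerunning the loops with $p,q$ in place of $2r$) only repairs failures of the $r$-inequalities used for speedup, not failures of the basic inequalities $3q>2k+2$ and $2q+\frac{p-1}{2}+2>2k+2$ on which the shape reductions rest; for those values you must fall back on the naive enumeration or on your (correct) observation that when $2k+1$ is prime the box of hook length $2k+1$ must sit at $(1,1)$, forcing the shape $(\lambda_1,2,1^{c})$ --- which is exactly how the paper disposes of $k=37$ and $k=41$. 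With the exceptional values rerouted to those checks, your verification scheme is sound and is essentially the published one.
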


Only minor changes to the above strategy (and code) are required to check the even case. One needs to define the auxiliary primes $p,q,r$ slightly differently; $q<p$ are the two biggest primes in $[1,k-1]\cup \{k+1\}$, and $r$ is the biggest prime in $[1,\lfloor\frac{k-1}{2}\rfloor]$, or $r=\frac{k+1}{2}$ if that is a prime. Thus it is again ensured that $p^2q^2r^4\mid\Pi^{'}_k$. Replacing all instances of $2k+1$ by $2k+2$, the proofs and verifications go through as above, except in the following points
\begin{enumerate}\item The analogue of the preliminary Claim~\ref{prelimclaim} does not go through for $k=37$ and $k=41$, which we therefore need to check by some other method. We use the naive algorithm for $k=37$, and for $k=41$ we use the fact that $83=2k+1\mid \Pi^{'}_k$, which is a prime, so that there exists a box of hook product $n-1$. A quick check then suffices for this case. 
\item The bounds on $r$ in the analogue of the preliminary Claim~\ref{prelimclaim} do not hold for $k=58$, and so for $k=58$ we cannot speed up our algorithms using the prime $r$ -- just as we couldn't use it for $k=40,57$ in the odd case.
\end{enumerate}

Except for the naive verification of $k=37$ which in itself takes half an hour, the even case takes about as much time as the odd case, and has been verified in about 3 hours.

\section*{Acknowledgements} 

We wish to thank Jan-Christoph Schlage-Puchta for bringing this problem to our attention. With regards to the presentation of the arguments and the readibility of the manuscript we are thankful for the many helpful comments of the anonymous referee.

\bibliographystyle{plain}
\bibliography{biblio}
{\sc Mathematisches Institut, Georg-August Universit\"at G\"ottingen, Germany.}
\small\tt Email address : {\bf kdebaen@mathematik.uni-goettingen.de}
\end{document}